\numberwithin{equation}{section}
\newtheorem{theorem}{Theorem}[section]
\newtheorem{proposition}[theorem]{Proposition}
\newtheorem{lemma}[theorem]{Lemma}
\newtheorem{corollary}[theorem]{Corollary}
\theoremstyle{remark}
\theoremstyle{definition}
\newtheorem{example}[theorem]{Example}
\newcommand{\e}{\varepsilon}
\newcommand{\R}{\mathbb{R}}
\newcommand{\ba}{\begin{array}}
\newcommand{\ea}{\end{array}}
\newcommand{\tld}[1]{\widetilde{#1}}
\newcommand{\bthm}{\begin{theorem}}
\newcommand{\ethm}{\end{theorem}}
\newcommand{\bprop}{\begin{proposition}}
\newcommand{\eprop}{\end{proposition}}
\newcommand{\blemma}{\begin{lemma}}
\newcommand{\elemma}{\end{lemma}}
\newcommand{\bexmpl}{\begin{example}}
\newcommand{\eexmpl}{\end{example}}
\newcommand{\beqn}{\begin{equation}}
\newcommand{\eeqn}{\end{equation}}
\newcommand{\beqns}{\begin{equation*}}
\newcommand{\eeqns}{\end{equation*}}
\newcommand{\ip}[2]{\left\langle #1 , #2 \right\rangle}
\newcommand{\pr}{\prime}
\newcommand{\Stwo}{{\mathbb{S}^2}}
\newcommand{\Htwo}{\mathcal{H}^2}
\newcommand{\A}{\mathcal{A}}
\newcommand{\Se}{\mathcal{S}}
\newcommand{\M}{\mathcal{M}}
\newcommand{\Y}{\mathcal{Y}}
\newcommand{\D}{\mathcal{D}}
\newcommand{\Inv}{\mathcal{I}}
\renewcommand{\leq}{\leqslant}
\renewcommand{\geq}{\geqslant}
\definecolor{mygreen}{rgb}{0.1,0.75,0.2}
\newcommand{\eps}{\epsilon}
\newcommand{\E}{\mathsf{E}}
\newcommand{\F}{\mathsf{F}}
\newcommand{\G}{\mathsf{G}}
\newcommand{\He}{\mathsf{H}}
\newcommand{\OK}{\mathsf{OK}}
\newcommand{\calK}{\mathcal{K}}
\DeclareMathOperator{\per}{per}
\DeclareMathOperator{\loc}{loc}
\title[Singular Perturbation of an Elastic Energy with a Singular Weight]{Singular Perturbation of an Elastic Energy with a Singular Weight}
\author{Oleksandr Misiats}
\address{Department of Mathematics and Applied Mathematics, Virginia Commonwealth University, Richmond, VA}
\email{omisiats@vcu.edu}
\author{Ihsan Topaloglu}
\address{Department of Mathematics and Applied Mathematics, Virginia Commonwealth University, Richmond, VA}
\email{iatopaloglu@vcu.edu}
\author{Daniel Vasiliu}
\address{Department of Mathematics and Applied Mathematics, Virginia Commonwealth University, Richmond, VA}
\email{dvasiliu@vcu.edu}
\date{\today}
\subjclass{35B40, 35J50, 49J40, 74G10, 74G65}
\keywords{solid-to-solid phase transitions, elastic energy, singular perturbation, microstructure, Young measure, scaling law}
\begin{document}

\begin{abstract} We study the singular perturbation of an elastic energy with a singular weight. The minimization of this energy results in a multi-scale pattern formation. We derive an energy scaling law in terms of the perturbation parameter and prove that, although one cannot expect periodicity of minimizers, the energy of a minimizer is uniformly distributed across the sample. Finally, following the approach developed by Alberti and M\"{u}ller \cite{AlMu2001} we prove that a sequence of minimizers of the perturbed energies converges to a Young measure supported on functions of slope $\pm 1$ and of period depending on the location in the domain and the weights in the energy.
\end{abstract}

\maketitle

\baselineskip=13pt

%%%%%%%%%%%%%%%%%%%%%%%%%%%%%%%%%%%%%%%%%%%%%%%%%%%%%%%%%%%%%%%%%
%%%%%%%%%%%%%%%%%%%%%%%%%%%%%%%%%%%%%%%%%%%%%%%%%%%%%%%%%%%%%%%%%
\section{Introduction}\label{sec:intro}
%%%%%%%%%%%%%%%%%%%%%%%%%%%%%%%%%%%%%%%%%%%%%%%%%%%%%%%%%%%%%%%%%
%%%%%%%%%%%%%%%%%%%%%%%%%%%%%%%%%%%%%%%%%%%%%%%%%%%%%%%%%%%%%%%%%

In this paper we study minimizers of the singularly perturbed energy functionals of the form
	\beqn\label{eqn:energy}
	 \E_\e(u) = \int_{0}^1\left( \e \, t^\alpha\,  |u^{\pr\pr}| + t^{-\beta}\,u^2\right)\,dt
	\eeqn
over the admissible class
	\[
		\A = \big\{u\in H^2([0,1])  \colon |u^\pr|=1,\, u(0)=u(1)=0\big\},
	\]
where $\alpha$ and $\beta$ are constants. 

Functionals of this type appear in models of solid-to-solid phase transitions. They can be used to describe the multi-scale pattern formation of distinct phases and to understand the length scale of fine structures as well as their periodicity. The functional \eqref{eqn:energy} without weights (i.e., $\alpha=\beta=0$) was analyzed by M\"{u}ller in \cite{Mul93} where he argued that when $\e>0$ is sufficiently small, the energy order of magnitude scales as $\e^{2/3}$ and the minimizers are periodic with period proportional to $\e^{1/3}$. Given the assumption of competing weights in front of the elastic and surface energy terms, the periodicity of minimizers of \eqref{eqn:energy} can no longer be anticipated. 

Specifically, the effect of the weights in the energy \eqref{eqn:energy} is that the minimizing fine structures have a priori unknown multi-scale behavior which depends on the location in the domain $[0,1]$. In \cite{AlMu2001}, Alberti and M\"{u}ller introduced a novel approach by extending  the classical $\Gamma$-convergence methods to rigorously analyze variational problems with two distinct length scales. As in formal asymptotics they introduced a slow and a fast scale and investigated the rescalings $R_s^\e u^\e(t) = \e^{-1/3}\,u^\e(s+\e^{1/3}t)$ of minimizers $u^\e$. This approach facilitated the derivation of a variational problem reformulated in terms of the Young measure that was generated by the maps $s\mapsto R_s^\e u^\e$. It was argued that this particular Young measure $\nu_s$ represents the probability that $R_s^\e u^\e$ assumes a certain value in a small neighborhood of $s$ in the limit $\e\to 0$. In other words, the measure $\nu_s$ gives the probability to find a certain pattern on the scale $\e^{1/3}$ near the point $s$ and is supported on micropatterns. 

In the seminal paper \cite{AlMu2001} two-scale energies were considered under the assumption that the weight of the elastic term is in $L^\infty$. Here we extend on their results by allowing unbounded weights for the bulk energy term, as well as an additional weight in the surface energy term. The first step in our proceedings is to construct an explicit upper bound on the energy of minimizers $\E_\e(u^\e)$ and to show that the energy is of order $\e^{2/3}$ when $\e\to 0$ , i.e.,
\[
C_1\,\e^{2/3} \leq \E_\e(u^\e) \leq C_2\, \e^{2/3}
\]
for some constants $C_1,\, C_2>0$ provided $\beta<3$. We prove this scaling law in Section \ref{sec:scaling}. This construction requires a fine analysis since near $t=0$ the singularity of the weight $t^{-\beta}$ needs to be controlled. 

As noted above, due to the weights in the energy functionals, one should not expect periodicity of minimizers; however, we are still able to obtain that the energy of a minimizer is distributed uniformly throughout the sample. This result is reminiscent of the branching phenomena that occurs near the austenite interface in some models of martensitic phase transitions (cf. \cite{Co2000,KoMu94}). As detailed in Section \ref{sec:distribution}, if we denote by $\varphi(x)$ the energy contribution of a minimizer on the interval $[0,x]$, then we obtain that
	\[
		\varphi(x) \leq C x \e^{2/3}
	\]
provided $x \geq C \e^{2/(9 - 3\beta)}$ and $\beta<3$ with $2\alpha\geq \beta$. This implies that, despite our upper bound construction for the energy scaling law requires more oscillations on the edge where the weight of the bulk term has a singularity, the energy distribution is still uniform across the domain.

Finally, in Section \ref{sec:limit}, we adapt the approach developed in \cite{AlMu2001} and, quite similarly, we identify the asymptotic limit of minimizers of energies $\E_\e$ and their diffuse-level counterparts. In particular, for $\alpha,\, \beta>0$ with $\beta-2\alpha<3$, we prove that for a sequence of minimizers $\{u^\e\}_{\e>0}$ the Young measure which arises as the limit of the maps $s\mapsto R_s^\e u^\e$ as $\e\to 0$ is supported on the set of all translations of sawtooth functions with slope $\pm 1$ and period a constant multiple of $s^{(\alpha+2\beta)/6}$. The arguments and proofs in this part of our paper follow mostly from those in \cite{AlMu2001}; however, due to the inclusion of a singular weight in the energy some modifications are required. While referring the reader to the results of \cite{AlMu2001}, we point out that nontrivial modifications are needed for our setup and we show how these modifications are obtained.

Of particular interest is the connection of the functionals \eqref{eqn:energy} with the Ohta--Kawasaki theory of diblock copolymers (cf. \cite{C2001,ChoksiOctober2003,OK}) on the surface of the unit two-sphere \cite{ChToTs15,To2013}. Pattern formation of ordered structures on curved surfaces arises in systems ranging from biology to materials science. These include covering virus and radiolaria architecture, colloid encapsulation for possible drug delivery. As for the study of diblock copolymers, the self-assembly in thin melt films confined to the surface of a sphere was investigated computationally in \cite{GC,Li_et_al2006} via a model that uses the self-consistent mean field theory. In \cite{Tang} the authors look at the patterns emerging as a result of phase separation of diblock copolymers numerically on spherical surfaces by using the Ohta--Kawasaki model.

Ohta--Kawasaki theory asserts that minimization of the energy
	   \beqn\label{eqn:nlip}
			\OK_\e(u) = \frac{1}{2} \int_\Stwo \e |\nabla u| + \int_\Stwo  |\nabla v|^2\,d\Htwo,
		\eeqn
over $BV(\Stwo;\{\pm1\})$ subject to the mass constraint $\int_\Stwo u\,d \Htwo = 4\pi m$, describes the pattern formation of diblock copolymers. Here $m\in(-1,1)$ is a constant, $\Stwo$ denotes the two-sphere in $\R^3$, $\Htwo$ denotes the two-dimensional Hausdorff measure, and $\int |\nabla u|$ is the total variation of the function $u$. The two phases of these copolymers are described by the phase parameter $u$ taking on values $-1$ and $1$. The function $v$ in the energy \eqref{eqn:nlip} is a solution of the Poisson problem
		\[
			-\Delta v = u-m,
		\]
where $-\Delta$ denotes the Laplace--Beltrami operator on $\Stwo$.  There is extensive literature on the mathematical analysis of the Ohta--Kawasaki model on flat domains, such as the flat-tori, general bounded domains, and the unbounded Euclidean space (see \cite{ChMuTo2017} for a review). 
%This well-studied energy functional is also a simple model where the competing short-range interfacial forces and long-range nonlocal forces drive the minimizers of the energy to pattern formation at microscopic scale.
 The rigorous mathematical analysis of the Ohta--Kawasaki model on curved spaces is rather rare \cite{ChToTs15,To2013}.

When the mass fraction $m$ is zero (i.e., equal amounts of the phases $1$ and $-1$), numerical computations reveal almost striped (or spiral-like) patterns (cf. \cite{GC,Li_et_al2006,Tang}). Approximate striped patterns of diblock copolymers confined in a ball that exhibit different scales depending on the height in the sample have also been observed in experiments \cite{Hi2017}. In order to analyze such patterns we can make an axisymmetric ansatz on the critical pattern (i.e., that a critical pattern $u$ is a function of the polar angle $\phi$ on $\Stwo$ only). Then the energy \eqref{eqn:nlip} of such an axisymmetric pattern becomes
  \beqn\label{eqn:1d_poly}
		\OK_\e(u)= \int_{-1}^1 \left( \e\sqrt{1-t^2}|u^{\pr\pr}| + \frac{u^2}{1-t^2} \right)\,dt
	\eeqn
by a change of variables $t=\cos\phi$ (see \cite[Section 2]{ChToTs15} for details). After another suitable change of variables, minimization of $\E_\e$ with $\alpha=1/2$ and $\beta=1$ is equivalent to minimization of energies $\tld{\E}_\e$ over $H^1$-functions which satisfy $|u^\pr|=1$ and $u(-1)=u(1)=0$.

We conclude by noting that throughout we will use lower and upper case letters $c$ and $C$ (possibly with subscripts as in $C_1$, $C_2$) to denote generic constants which might change from line to line. When necessary we will denote the dependence of a constant to a particular parameter by using the standard function notation such as in $C(\gamma)$.

\bigskip

%%%%%%%%%%%%%%%%%%%%%%%%%%%%%%%%%%%%%%%%%%%%%%%%%%%%%%%%%%%%%%%%%
%%%%%%%%%%%%%%%%%%%%%%%%%%%%%%%%%%%%%%%%%%%%%%%%%%%%%%%%%%%%%%%%%
\section{Scaling of the energies $\E_\e$ and their diffuse-level counter parts}\label{sec:scaling}
%%%%%%%%%%%%%%%%%%%%%%%%%%%%%%%%%%%%%%%%%%%%%%%%%%%%%%%%%%%%%%%%%
%%%%%%%%%%%%%%%%%%%%%%%%%%%%%%%%%%%%%%%%%%%%%%%%%%%%%%%%%%%%%%%%%

In addition to the enegies $\E_\e$ we also consider their diffuse-level counterparts given by the  functionals
\beqn\label{eqn:energy_diff}
	 \F_\e(u) = \int_{0}^1\left( \e^2 \, t^\alpha\,  (u^{\pr\pr})^2 +W(u^\pr)+ t^{-\beta}\,u^2\right)\,dt
\eeqn
over $H^2([0,1])$ where $W$ denotes the double-well potential $W(x):=\frac{1}{4}(1-x^2)^2$. Heuristically it is clear that when $\e$ is sufficiently small the first two terms in the energy $\e^{-1}\F_\e$ approximate $\int_0^1 |u^{\pr\pr}|\,dt$ for any function with $|u^\pr|=1$.

Our main results in this section are the scaling laws for the minimal energies of the functionals $\E_\e$ and $\F_\e$. Namely, we have the following theorems.

\begin{theorem}\label{thm:1}
  Let $\alpha\in\R$ and $\beta<3$. Let $\tilde{u}$ be a minimizer of $\E_\e$ in the class $\A$. Then there are constants $0< C_1 \leq C_2$ such that
  \[
		C_1\,\e^{2/3} \leq \E_\e(\tilde{u}) \leq C_2\, \e^{2/3}.
	\]
\end{theorem}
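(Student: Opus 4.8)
The statement is a matched two-sided bound, so I would prove the lower and upper inequalities separately; the lower bound is comparatively soft, while the upper bound carries the real work because of the singularity of $t^{-\beta}$ at the origin. For the lower bound I would simply localize away from the singularity. On the fixed interval $[1/2,1]$ the weights satisfy $t^\alpha\ge c_1>0$ and $t^{-\beta}\ge c_2>0$ for constants depending only on $\alpha,\beta$, so $\E_\e(\tilde u)\ge \int_{1/2}^1\big(\e\,c_1\,|u^{\pr\pr}|+c_2\,u^2\big)\,dt$, and it suffices to bound this (essentially unweighted) two-scale energy below by $C_1\e^{2/3}$. This is the classical rigidity estimate of \cite{Mul93}: since $|u^\pr|=1$ the function is a zigzag, and on an interval of fixed length one balances the number of corners (each contributing a fixed jump to $\int|u^{\pr\pr}|$, hence $\gtrsim\e$ to the surface energy) against the amplitude of the corner-free excursions (a linear piece of length $\ell$ forces $\int u^2\gtrsim \ell^3$); optimizing over $\ell\sim\e^{1/3}$ yields $\gtrsim\e^{2/3}$. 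I would record this as a short lemma and note that it uses only positive lower bounds on the weights, so it holds for every $\alpha,\beta$.

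For the upper bound I would exhibit an explicit competitor $u\in\A$ and invoke minimality, $\E_\e(\tilde u)\le\E_\e(u)$. The local balance of the two terms is the guide: on a sawtooth of slope $\pm1$ with local period $\lambda$, the surface density is $\sim \e\,t^\alpha/\lambda$ and the bulk density is $\sim t^{-\beta}\lambda^2$, so equating them gives the spatially varying period $\lambda(t)\sim\e^{1/3}t^{(\alpha+\beta)/3}$ and an energy density $\sim\e^{2/3}t^{(2\alpha-\beta)/3}$. I would build a sawtooth realizing a dyadic approximation of this period on $[\delta,1]$, choosing the mesh so that $u$ vanishes at the endpoints and $|u^\pr|=1$; the contribution of $[\delta,1]$ is then $\lesssim \e^{2/3}\int_\delta^1 t^{(2\alpha-\beta)/3}\,dt$, which stays bounded as $\delta\to0$ whenever $\beta<2\alpha+3$ (in particular whenever $\alpha\ge0$ and $\beta<3$).

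The main obstacle is the region near $t=0$, where $t^{-\beta}$ blows up and the competitor's bulk energy must still be kept $\lesssim\e^{2/3}$. The key observation is the pointwise constraint forced by the admissible class: since $u(0)=0$ and $|u^\pr|=1$ one has $|u(t)|\le t$, hence $\int_0^\delta t^{-\beta}u^2\,dt\le\int_0^\delta t^{2-\beta}\,dt=\delta^{3-\beta}/(3-\beta)$, which is finite precisely because $\beta<3$. Thus on $[0,\delta]$ I would use a single coarse tent (one corner) that keeps $|u|\le t$, paying only $\lesssim\delta^{3-\beta}$ in bulk and a controlled surface cost, and then choose $\delta$ as a suitable power of $\e$ and sum the two contributions. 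The delicate points, where I expect the analysis to concentrate, are (i) gluing this coarse near-origin piece to the fine structure on $[\delta,1]$ while preserving $|u^\pr|=1$ and the matching values, and (ii) verifying that the $t^\alpha$-weighted surface cost of the corners does not dominate; this is exactly where the interplay between $\alpha$ and $\beta$ (through the exponent $(2\alpha-\beta)/3$) enters and the constant $C_2$ is pinned down. Combining the two estimates gives $C_1\e^{2/3}\le\E_\e(\tilde u)\le C_2\e^{2/3}$.
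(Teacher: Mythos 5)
Your proposal is correct and follows essentially the same route as the paper: minimality plus an explicit sawtooth competitor graded toward the origin (your locally equilibrated period $\lambda(t)\sim\e^{1/3}t^{(\alpha+\beta)/3}$ is a reparametrization of the paper's power-law mesh $y_k=(\gamma+1)k^\gamma/n^{\gamma+1}$ with $n\sim\e^{-1/3}$ teeth), with the singularity handled identically via the constraint $|u(t)|\le t$ and $\int_0^{\delta} t^{2-\beta}\,dt<\infty$ for $\beta<3$, and the lower bound obtained, as in the paper, by localizing away from $t=0$ and invoking the classical rigidity argument of M\"{u}ller. One remark: your restriction $\beta<2\alpha+3$ is exactly what the paper's own constraints \eqref{constr4} and \eqref{constr5} implicitly force (they combine to require $\alpha>-(3-\beta)/2$), so your proof covers the same $(\alpha,\beta)$ range as the published argument, even though the theorem is stated for all $\alpha\in\R$.
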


\medskip

\begin{theorem}\label{thm:2}
  Let $\alpha\in\R$ and $\beta<3$. Let $\tilde{u}\in H^2([0,1])$ be a minimizer of $\F_\e$ subject to $u(0) = u(1) =0$. Then there are constants $0< C_1 \leq C_2$ such that
  \[
		C_1\,\e^{2/3} \leq \F_\e(\tilde{u}) \leq C_2\, \e^{2/3}.
	\]
\end{theorem}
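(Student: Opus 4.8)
The plan is to establish the two bounds separately, running in close parallel to the (analogous) argument for Theorem~\ref{thm:1} but with the sharp surface term $\e\,t^\alpha|u^{\pr\pr}|$ replaced by the diffuse pair $\e^2 t^\alpha (u^{\pr\pr})^2 + W(u^\pr)$. The bridge between the two is the elementary (Modica--Mortola/Young) inequality
\[
  \e^2 t^\alpha (u^{\pr\pr})^2 + W(u^\pr) \ge 2\,\e\, t^{\alpha/2}\,\sqrt{W(u^\pr)}\,\abs{u^{\pr\pr}} = 2\,\e\,t^{\alpha/2}\,\abs{\big(G(u^\pr)\big)^\pr},
\]
where $G(p):=\int_0^p \sqrt{W(s)}\,ds$. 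Thus an optimal transition of $u^\pr$ across the two wells $\pm1$ localized near a point $t$ costs $\approx \e\,t^{\alpha/2}\,\sigma$ with $\sigma := 2\int_{-1}^1\sqrt{W(s)}\,ds$; this is the diffuse analogue of the jump cost $2\e\,t^\alpha$ in $\E_\e$, and balancing it against the bulk term $t^{-\beta}u^2$ produces the optimal local period $p(t)\sim \e^{1/3}t^{(\alpha+2\beta)/6}$ and the common energy density $\e^{2/3}t^{(\alpha-\beta)/3}$.

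For the \emph{upper bound} I would regularize the competitor built for Theorem~\ref{thm:1}. Away from the origin I place a sawtooth of slope $\pm1$ with the position-dependent period $p(t)\sim\e^{1/3}t^{(\alpha+2\beta)/6}$, and I replace each corner by the optimal one-dimensional transition profile of width $\sim \e\,t^{\alpha/2}$, so that the resulting $u$ lies in $H^2$, has $u^\pr$ equal to $\pm1$ outside the thin layers (where $W(u^\pr)=0$), and carries $\F_\e$-energy comparable to the $\E_\e$-energy of the corresponding sawtooth. Near $t=0$ the singular weight $t^{-\beta}$ forbids large amplitudes, so there I use a coarsely oscillating (essentially straight, slope $\pm1$) profile with amplitude $\sim t$; the hypothesis $\beta<3$ is exactly what makes the singular contribution $\int_0^{\tau} t^{-\beta}\,t^2\,dt=\int_0^{\tau} t^{2-\beta}\,dt$ finite, keeping this piece $O(\e^{2/3})$. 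Summing the surface and bulk contributions over the two regions yields $\F_\e(\tilde u)\le C_2\,\e^{2/3}$.

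For the \emph{lower bound} I would first discard the nonnegative slack in the Young inequality above to reduce $\F_\e(u)$ to a surface-plus-bulk functional, and then run the localization/interpolation scheme of Theorem~\ref{thm:1}: partition the relevant part of $[0,1]$ into dyadic intervals on which $t^\alpha$ and $t^{-\beta}$ are each comparable to a constant, bound the energy from below on every interval $I$ by balancing the number of transitions of $u^\pr$ between the wells (each costing $\gtrsim\e\,t^{\alpha/2}$) against $\int_I t^{-\beta}u^2$ through a M\"{u}ller-type interpolation inequality, and sum the resulting local estimates $\gtrsim \e^{2/3}t^{(\alpha-\beta)/3}\,\abs{I}$ to obtain $\F_\e(\tilde u)\ge C_1\,\e^{2/3}$.

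I anticipate two main obstacles. The first, already present for $\E_\e$, is the behavior near $t=0$: in the upper bound one must choose the crossover scale between the oscillatory and the straight regimes so that neither the surface nor the singular bulk contribution exceeds $O(\e^{2/3})$, and in the lower bound one must ensure the dyadic summation converges; both hinge on $\beta<3$. The second is genuinely diffuse: because the admissible class for $\F_\e$ does \emph{not} impose $\abs{u^\pr}=1$, the interpolation step must be carried out without that constraint---regions where $u^\pr$ is far from $\{\pm1\}$ are controlled directly by the $W(u^\pr)$ term, while the transition count fed into the interpolation must be extracted from the Modica--Mortola energy rather than read off from genuine corners. Making this extraction quantitative (so that each transition truly costs of order $\e\,t^{\alpha/2}$) is, I expect, the most delicate point of the argument.
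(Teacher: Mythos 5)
Your proposal is sound, and for the upper bound it is essentially the paper's route: the paper also regularizes the sharp-interface sawtooth of Theorem \ref{thm:1}, but more crudely than you suggest. It caps each corner $z_k$ with an explicit parabola $f_\mu$ of a \emph{single, $t$-independent} width $\mu$, computes $\int_0^1 W(\hat u^\pr)\,dt \sim n\mu$ and $\int_0^1 \e^2 t^\alpha (\hat u^{\pr\pr})^2\,dt \lesssim n\e^2/\mu$ (using $\sum_k z_k^\alpha \leq Cn$), optimizes to $\mu = \e$, and then optimizes $n \sim \e^{-1/3}$ exactly as in Theorem \ref{thm:1}. Your locally optimal layer width $\e\,t^{\alpha/2}$ and local period $\e^{1/3} t^{(\alpha+2\beta)/6}$ are the ``right'' scales --- they reappear verbatim in the limit description of Theorem \ref{thm:4} --- but they are not needed for the scaling law, and your full local-equipartition ansatz carries a caveat the paper's cruder version partially sidesteps: the density $\e^{2/3} t^{(\alpha-\beta)/3}$ is integrable near $0$ only if $\beta - \alpha < 3$, so for strongly negative $\alpha$ your crossover between the straight and oscillatory regimes cannot keep both contributions at $O(\e^{2/3})$ without further ideas. (To be fair, the paper's construction has hidden constraints of the same flavor, \eqref{constr3}--\eqref{constr5} on the parameter $\gamma$, so neither argument is fully uniform over all $\alpha\in\R$.)

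The genuine divergence is the lower bound, where your plan is far heavier than what the paper does. You propose to rebuild the Modica--Mortola/Young-inequality reduction, a dyadic decomposition adapted to the weights, and a M\"{u}ller-type interpolation with a quantitative transition count extracted from the diffuse energy --- and you correctly flag that last step, in the absence of the constraint $|u^\pr| = 1$, as the delicate point. The paper avoids all of it: it chops off $[0,\delta]$, observes that on $[\delta,1]$ both $t^\alpha$ and $t^{-\beta}$ are bounded above and below by positive constants, and quotes the known $\e^{2/3}$ lower bound for the \emph{unweighted} diffuse functional from \cite{Mul93,Yip06}; the quantitative transition extraction you anticipate struggling with is precisely the content of those references, and their local estimates need no boundary conditions on $[\delta,1]$. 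Since the weights are comparable to constants on any fixed subinterval away from the singularity, your dyadic summation buys nothing for the scaling law (a single interval such as $[1/2,1]$ already yields $c\,\e^{2/3}$); it would only become relevant if one wanted a weighted, interval-by-interval lower bound in the spirit of the uniform energy distribution result of Section \ref{sec:distribution}.
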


\bigskip

We start with the proof of the first theorem. The main difficulty here is to construct a good upper bound which would compensate the contribution of the singularity $t^{-\beta}$ to the energy.

\begin{proof}[Proof of Theorem \ref{thm:1}] Let us start with the construction of the upper bound.
Fix $n \geq 2$ and let
	\beqn \label{constr1}
		\gamma>-1
	\eeqn
be a constant to be chosen later. We are going to construct a piecewise linear test function $\tilde{u}$ where $\tilde{u}^{\pr}$ has $n$-many jumps located at the points $\{z_k \colon 1 \leq k \leq n\}$. In addition, we will set $z_0 = 0$ and $z_{n+1} = 1$. Now let $z_k$ be such that
\begin{equation}\label{spacing}
y_k := z_k-z_{k-1} = (\gamma+1) \frac{k^\gamma}{n^{\gamma+1}}.
\end{equation}
In particular,	$z_k = \sum_{i=1}^{k} y_i$. Using a generalization of the classic Faulhaber's formula \cite{McGoPa07} given by
\beqn\label{Fau}
\sum_{k=1}^{n} k^{\mu} = \frac{n^{\mu+1}}{\mu+1} + O(n^{\mu}) \qquad \text{for any } \mu>-1,
\eeqn
we get that there is a constant $C(\mu)>0$ such that
\begin{equation}\label{sum}
\sum_{k=1}^{n} k^{\mu} \leq C(\mu) \, n^{\mu+1}
\end{equation}
for any $n \geq 2$. Also, as a consequence of \eqref{Fau}, we have $
\sum_{k=1}^{n} y_k = 1 + O\left(1/n\right)$.
If needed, we can modify the last term $y_{n+1}$ in order to enforce $z_{n+1}=1$, thus
$\sum_{k=1}^{n+1} y_k = 1$ Now we define $\tilde{u}$ on $[0,1]$ as the piecewise linear function of slopes $\pm 1$ where the jumps in the derivative of $\tilde{u}$ occur at $z_k$'s (see Figure~\ref{fig:u_tilde} for an example).

\begin{figure}[ht!]
	\begin{center}
		\includegraphics[width=0.7\linewidth]{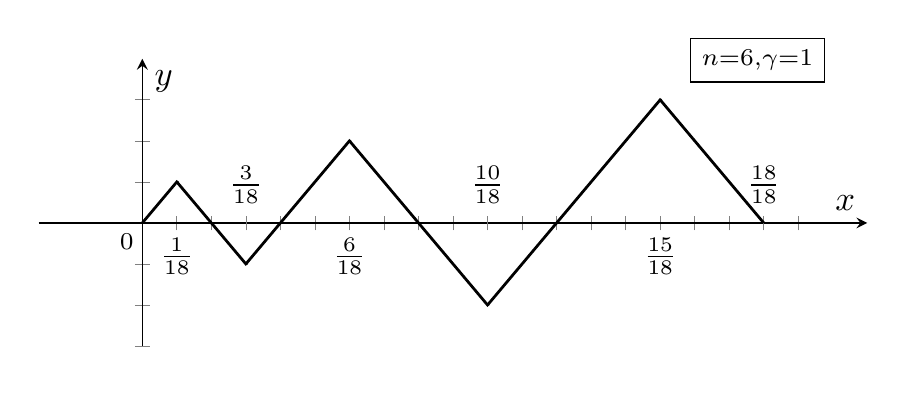}
		\caption{\footnotesize{The function $\tilde{u}$ with six jumps in its derivative, and $\gamma=1$.}} \label{fig:u_tilde}
	\end{center}
\end{figure}

For $t \in [0,z_1]$, we have
\[
\int_{0}^{z_1} t^{-\beta}\, \tilde{u}^2\, dt = \int_{0}^{z_1} t^{2-\beta}\, dt = \frac{z_1^{3-\beta}}{3-\beta}.
\]
provided
\begin{equation}\label{constr2}
\beta<3.
\end{equation}
For $k \geq 2$ the test function $\tilde{u}$ satisfies	$\max_{t \in [z_{k-1},z_k]}|\tilde{u}| = \max\{|\tilde{u}(z_{k-1})|,|\tilde{u}(z_k)|\} \leq y_k$. Therefore, for $k \geq 2$, we have
\[
\int_{z_{k-1}}^{z_{k}} t^{-\beta}\, \tilde{u}^2 \, dt \leq (z_{k-1})^{-\beta} y_k^3   = \left(\sum_{i=1}^{k-1} y_i \right)^{-\beta} y_k^3.
\]
Using \eqref{sum}, for all $k \geq 2$,
\[
\sum_{i=1}^{k-1} y_i = (\gamma + 1) \sum_{i=1}^{k-1} \frac{i^{\gamma}}{n^{\gamma + 1}} \leq  C(\gamma) \frac{k^{\gamma+1}}{n^{\gamma+1}}.
\]
Applying \eqref{sum} once again with
\begin{equation}\label{constr3}
\mu=  3\gamma - \beta(\gamma+1)>-1,
\end{equation}
there exists some constant $C_1(\gamma)>0$ such that
	\begin{align*}
		\int_{0}^{1} t^{-\beta}\, \tilde{u}^2 \, dt &=  \sum_{k=1}^{n+1} \int_{z_{i-1}}^{z_i} t^{-\beta} \tilde{u}^2 dt \leq \frac{z_1^{3-\beta}}{3-\beta} + C(\gamma)^{-\beta} \sum_{k=2}^{n+1}\frac{k^{-\beta(\gamma+1)}}{n^{-\beta(\gamma+1)}} \frac{k^{3\gamma}}{n^{3(\gamma+1)}} \\
													  &\leq \frac{\gamma+1}{3-\beta} \frac{1}{n^{(\gamma+1)(3 - \beta)}} + C_1(\gamma) \frac{n^{-\beta(\gamma+1)+ 3\gamma + 1}}{n^{-\beta(\gamma+1)+ 3(\gamma + 1)}}\\
													  &=\frac{\gamma+1}{3-\beta} \frac{1}{n^{(\gamma+1)(3 - \beta)}} + \frac{C_1(\gamma)}{n^2} = \frac{C_1(\gamma)}{n^2} + O\left(\frac{1}{n^2}\right)
	\end{align*}
provided
\begin{equation}\label{constr4}
(\gamma+1)(3 - \beta) \geq 2.
\end{equation}
In addition, for
\begin{equation}\label{constr5}
\alpha > -\frac{1}{1+\gamma},
\end{equation}
we have
	\begin{align*}
		\int_{0}^{1} \e \, t^\alpha \,  |\tilde{u}^{\pr\pr}| \, dt &= 2 \e\, \sum_{k=1}^{n} (z_k)^{\alpha} = 2 \e \, \sum_{k=1}^n \left(\sum_{i=1}^k y_i\right)^{\alpha} \\
																				   &\leq \e\, C(\gamma) \sum_{k=1}^{n} \left(\frac{k^{\gamma+1}}{n^{\gamma+1}}\right)^{\alpha} \leq \e\, C^2(\gamma) \frac{n^{\alpha(\gamma + 1)+1}}{n^{\alpha(\gamma+1)}} \leq \e\, C^2(\gamma)\, n.
	\end{align*}	
Hence, for $\alpha, \beta$ and $\gamma$ satisfying \eqref{constr1}, \eqref{constr2}, \eqref{constr3}, \eqref{constr4} and \eqref{constr5} (see Figure~\ref{fig:gamma_beta_domain}), we have
	\[
		\E_\e(\tilde{u}) \leq \e \,C^2(\gamma)\, n + \frac{C_1(\gamma)}{n^2}.
	\]
Optimizing in $n$, we see that $n = C \e^{-1/3}$. Therefore $\E_\e(\tilde{u}) \leq C_2 \e^{2/3}$ for some constant $C_2>0$.

\begin{figure}[ht!]
     \begin{center}
     	\includegraphics[width=0.45\linewidth]{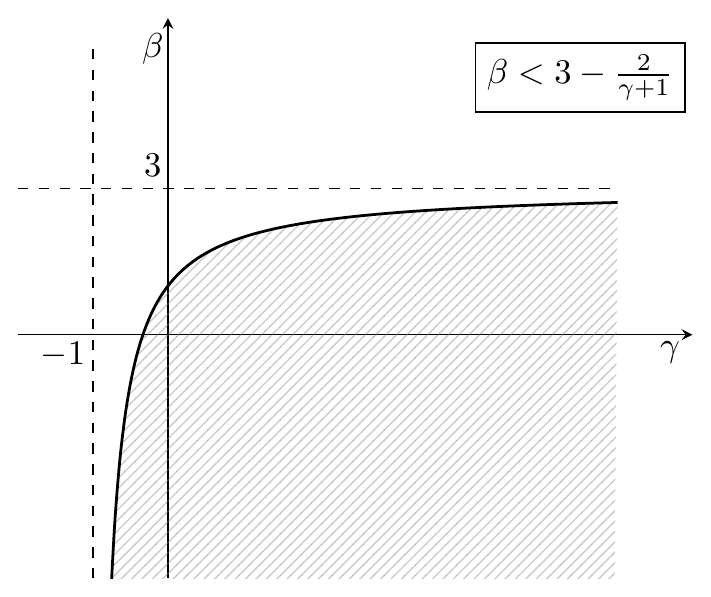}
     \end{center}
    \caption{\footnotesize{
        The shaded area shows the possible choices for $\gamma$ and $\beta$ satisfying the constraints \eqref{constr1}, \eqref{constr2}, \eqref{constr3}, and \eqref{constr4}.}
     }
   \label{fig:gamma_beta_domain}
\end{figure} 

The lower bound follows by repeating the calculations in \cite{Mul93,Yip06} on the interval $[\delta,1]$ for any $0<\delta<1$. Namely, we have that
	\[
		\E_\e(\tilde{u}) \geq  C(\delta) \int_{\delta}^1\left( \e \,   |\tilde{u}^{\pr\pr}| + \tilde{u}^2\right)\,dt \geq C_1 \e^{2/3}
	\]
for some constants $C(d),\, C_1>0$ independent of $\e$. This concludes the proof of the theorem.
\end{proof}

\bigskip

We now turn to the proof of the second theorem.

\begin{proof}[Proof of Theorem \ref{thm:2}] As before, we start with the upper bound. Following the proof of Theorem \ref{thm:1}, for fixed $n \geq 2$ we are going to construct a test function $\hat{u}$ as a modification of the function $\tilde{u}$ obtained in the proof above. For $\mu>0$ define
\beqn 
f_{\mu}(t) =
\begin{dcases*}
\frac{t^2}{2 \mu} & \quad if  $|t| \leq \mu$, \\
|t| - \frac{\mu}{2} & \quad if $|t| \geq \mu$
\end{dcases*}
\eeqn
so that $f_{\mu}(x) \in C^1(-1,1)$ (see Figure~\ref{fig:f_mu_and_u_hat}). We may now define $\hat{u}$ as
\beqn \label{smoothTF}
\hat{u}(t) =
\begin{dcases*}
\pm f_{\mu}(t) + c_k & \quad for $|t-z_k|<\mu$,  $1 \leq k \leq n$,\\
\tilde{u}(t) & \quad otherwise,
\end{dcases*}
\eeqn
where the sign $\pm$ and the constants $c_k$ are chosen in order for $\hat{u} \in C^1([0,1])$, and $\tilde{u}$ is the test function from the Theorem \ref{thm:1}.

\begin{figure}[ht!]
     \begin{center}
     	\includegraphics[height=3.8cm]{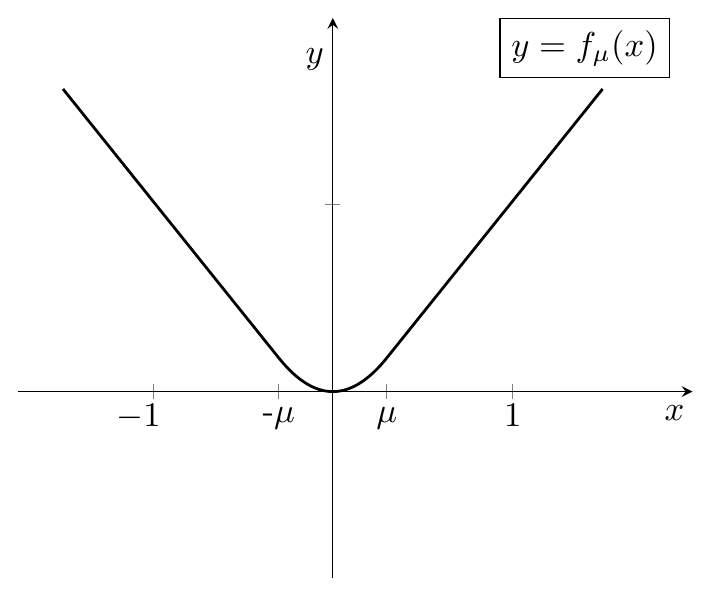} \, \includegraphics[height=3.8cm]{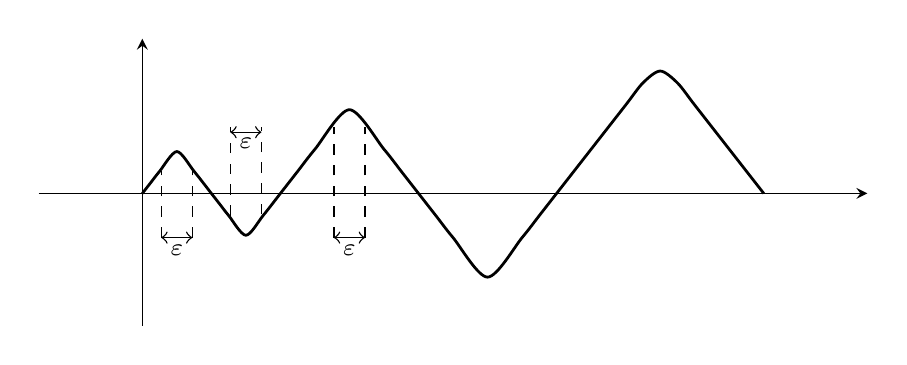}
     \end{center}
    \caption{\footnotesize{
        The functions $f_\mu$ and $\hat{u}$ with the optimal transition layer scale given by $\mu=\e$.}
     }
   \label{fig:f_mu_and_u_hat}
\end{figure} 

 In addition to the conditions for $\alpha, \beta$ and $\gamma$ in Theorem \ref{thm:1}, the natural restriction for the construction \eqref{smoothTF} is that $z_1 \gg \mu$.

%which holds provided
%\begin{equation}\label{condit6}
%\gamma>2.
%\end{equation}
By construction we have that
\begin{equation}\label{est1}
\int_{0}^{1} t^{-\beta}\,\hat{u}^2 dt \leq \int_{0}^{1} t^{-\beta}\,\tilde{u}^2 dt \leq \frac{C}{n^2}.
\end{equation}
Therefore it remains to estimate the first two terms in the energy,
\[
\int_{0}^1\left( \e^2 \, t^\alpha\,  (\hat{u}^{\pr\pr})^2 + W(\hat{u}^\pr)\right)\,dt.
\]
Note that
\[
\int_{0}^1 W(\hat{u}^\pr)\, dt = \int_{0}^{1} (1 - (\hat{u}^\pr)^2)^2 \,dt = 2 n \, \int_{z_1}^{z_1+\mu} \left(1 - \left(\frac{t}{\mu}\right)^2 \right)^2 dt  = C \, n\, \mu.
\]
Next we estimate
	\begin{align*}
		\int_{0}^{1} \e^2\, t^{\alpha} (\hat{u}^{\pr\pr})^2 \, dt &= \frac{\e^2}{\mu^2} \sum_{k=1}^{n} \int_{z_k-\mu}^{z_k+\mu} t^{\alpha}\, dt = \frac{\e^{2}}{(\alpha+1) \mu^2} \sum_{k=1}^{n}\Big[(z_k+\mu)^{\alpha+1} - (z_k-\mu)^{\alpha+1}\Big] \\
																			 &\leq \frac{C(\alpha)  \, \e^2}{\mu} \sum_{k=1}^{n} z_k^{\alpha} \leq  \frac{C(\alpha)\, n\,\e^2}{\mu}
	\end{align*}
for some $C(\alpha)>0$ depending on $\alpha$. Hence
\[
\int_{0}^1\left( \e^2 \, t^\alpha\,  (\hat{u}^{\pr\pr})^2 + W(\hat{u}^\pr)\right)\,dt \leq  \frac{C(\alpha)\, n\,\e^2}{\mu} +  C \, n\, \mu.
\]
First we optimize in $\mu$ and get that $\mu = \e$ (see, again, Figure~\ref{fig:f_mu_and_u_hat}). This yields the estimate
\[
\int_{0}^1\left( \e^2 \, t^\alpha\,  (\hat{u}^{\pr\pr})^2 + W(\hat{u}^\pr)\right)\,dt \leq C(\alpha) \,\e\, n.
\]
Combining this with the estimate \eqref{est1} we get
\[
\F_\e(\hat{u}) \leq  \frac{C}{n^2} + C(\alpha) \, \e \, n.
\]
Finally optimizing in $n$ yields the upper bound
\[
\F_\e(\hat{u}) \leq C_2 \e^{2/3}
\]
for some $C_2>0$ independent of $\e$.

\medskip

The lower bound follows as in the proof of Theorem \ref{thm:1} above by chopping off a small interval $[0,\delta]$ containing the singularity, and estimating the energy $\F_\e$ from below using the computations in the literature \cite{Mul93,Yip06}.
\end{proof}

\bigskip

%%%%%%%%%%%%%%%%%%%%%%%%%%%%%%%%%%%%%%%%%%%%%%%%%%%%%%%%%%%%%%%%%
%%%%%%%%%%%%%%%%%%%%%%%%%%%%%%%%%%%%%%%%%%%%%%%%%%%%%%%%%%%%%%%%%
\section{Uniform energy distribution of minimizers of $\E_\e$}\label{sec:distribution}
%%%%%%%%%%%%%%%%%%%%%%%%%%%%%%%%%%%%%%%%%%%%%%%%%%%%%%%%%%%%%%%%%
%%%%%%%%%%%%%%%%%%%%%%%%%%%%%%%%%%%%%%%%%%%%%%%%%%%%%%%%%%%%%%%%%

In this section we prove that the energy of a minimizer is distributed uniformly across the domain although our upper bound construction in the previous section show more frequent oscillations between the phases $\pm 1$ closer to the boundary of the domain at zero.

Let $u_{*}$ be the global minimizer of $\E_\e$. For $x \in [0,1]$ let $\varphi(x)$ be the energy of the minimizer  of $\E_\e$ given on the interval $t \in [0,x]$. Namely,
	\[
		\varphi(x) := \E_\e^x(u_*) \qquad \text{where} \qquad \E_\e^x(u_*) := \int_{0}^x \left( \e \, t^{\alpha} |{u_*}^{\pr\pr}| + t^{-\beta} \, {u_*}^2 \right)\,dt.
	\]
Then our main result in this section states that $\varphi$ grows linearly in $x$.

\begin{theorem}\label{thm:3}
Let $\beta<3$. There exist absolute constants $c_1>0$ and $c_2>0$, such that if
\begin{equation}\label{cond on x}
x \geq c_1 \, \e^{2/(9 -3 \beta)}
\end{equation}
and
\begin{equation}\label{relation}
2\alpha \geq \beta,
\end{equation}
then
\[
\varphi(x) \leq c_2 \,x\, \e^{2/3}.
\]
\end{theorem}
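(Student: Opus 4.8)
The plan is to bound $\varphi(x)$ by a minimality (competitor) argument. Since $u_*$ minimizes $\E_\e$, replacing it on $[0,x]$ by any admissible $v$ that agrees with $u_*$ on $[x,1]$ can only increase the energy, so
\[
\varphi(x)=\E_\e^x(u_*)\le \E_\e^x(v)+2\e x^{\alpha},
\]
where the last term bounds the surface cost of the single slope mismatch created at the junction $t=x$. The task is thus to exhibit a profile $v$ on $[0,x]$ with $v(0)=0$, $v(x)=u_*(x)$ and $|v^{\pr}|=1$ whose energy there is at most $c_2\,x\,\e^{2/3}$.

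For the construction I would reuse the graded sawtooth $\tilde u$ of Theorem~\ref{thm:1}, rescaled to $[0,x]$. Writing $t=xs$ and $w(s)=v(xs)/x$, a change of variables (the jumps of $v^\pr$ at $t_k=xs_k$ have the same magnitudes as those of $w^\pr$ at $s_k$) gives
\[
\E_\e^x(v)=\e x^{\alpha}\int_0^1 s^{\alpha}\,d|w^{\pr\pr}| + x^{3-\beta}\int_0^1 s^{-\beta}w^2\,ds = x^{3-\beta}\Big(\e'\int_0^1 s^{\alpha}\,d|w^{\pr\pr}| + \int_0^1 s^{-\beta}w^2\,ds\Big),
\]
with effective parameter $\e'=\e\,x^{\alpha+\beta-3}$. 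The bracket is precisely the energy of Theorem~\ref{thm:1} on $[0,1]$ at parameter $\e'$, so its upper-bound construction yields $\E_\e^x(v)\le C\,x^{3-\beta}(\e')^{2/3}=C\,\e^{2/3}x^{(3+2\alpha-\beta)/3}$. The hypothesis $2\alpha\ge\beta$ makes the $x$-exponent at least $1$, and since $x\le 1$ this gives $\E_\e^x(v)\le C\e^{2/3}x$, as desired. Condition~\eqref{cond on x} enters exactly to license this: it is equivalent to $x^{3-\beta}\ge c\,\e^{2/3}$, which—granting $2\alpha\ge\beta$—is a clean, $\alpha$-free sufficient condition for $\e'\le 1$, i.e.\ for the Theorem~\ref{thm:1} construction to sit in its valid many-oscillation regime; the same threshold also dominates the correction, $\e x^{\alpha}\le C x\e^{2/3}$. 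It is a pleasant point that, although the construction's validity a priori depends on $\alpha$, the $\alpha$-free threshold $x^{3-\beta}\ge c\e^{2/3}$ suffices uniformly once $2\alpha\ge\beta$ holds.

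The step I expect to be the main obstacle is the boundary matching $v(x)=u_*(x)$. The rescaled construction keeps $v$ small only if $u_*(x)$ is small compared with $x$; if $|u_*(x)|$ were of order $x$, the constraint $|v^\pr|=1$ would force $v$ to run as an essentially monotone ramp near $t=x$, whose bulk contribution is of order $x^{3-\beta}\ge c\e^{2/3}$ — too large by a factor $1/x$ in precisely the regime $x\ge c_1\e^{2/(9-3\beta)}$. The argument must therefore use that a minimizer does not traverse $[0,x]$ monotonically: its amplitude near $t=x$ should be comparable to the local oscillation scale $(\e x^{\alpha+\beta})^{1/3}\ll x$ (the last inequality again being guaranteed by the threshold), not to $x$. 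I would secure this either by an auxiliary local-replacement estimate — if $u_*$ were monotone on a subinterval near $t=x$ one could insert an oscillation and strictly lower the energy, contradicting minimality — or by first proving the bound at points where $u_*$ is (near) a zero, so that the competitor oscillates from $0$ back to $0$ with no net drift, and then extending to all admissible $x$ using the monotonicity of $\varphi$ and a bound on the gap between consecutive zeros of $u_*$.

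The lower-bound side is not needed here. The remaining points are routine: the admissible adaptation of the Theorem~\ref{thm:1} profile to a nonzero endpoint value (adjusting the last tooth), and checking that the constants $c_1,c_2$ can be taken absolute, subject to $\beta<3$ and $2\alpha\ge\beta$.
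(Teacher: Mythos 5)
Your first half coincides with the paper's: $\varphi(x)$ is bounded through a competitor, and your rescaling $t=xs$, $w(s)=v(xs)/x$ of the Theorem~\ref{thm:1} construction is exactly the paper's Lemma~\ref{lem:1}, which gives $e_\e^a \le C\,a^{(3+2\alpha-\beta)/3}\e^{2/3}$, with \eqref{relation} making the $x$-exponent at least $1$. The genuine gap is the step you yourself flag as ``the main obstacle'': matching the endpoint value $v(x)=u_*(x)$. Neither of your two proposed fixes is carried out, and neither is routine. Fix (a) asserts $|u_*(x)|\lesssim (\e x^{\alpha+\beta})^{1/3}$, i.e.\ amplitude comparable to the local period; that is considerably stronger than anything the paper proves (or needs), and the ``insert an oscillation'' replacement argument would require exactly the kind of local energy control that Theorem~\ref{thm:3} is in the process of establishing, so as stated it is close to circular. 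Fix (b) requires an a priori bound on the gap between consecutive zeros of $u_*$, which is unavailable for the same reason. Moreover, your reading of the threshold \eqref{cond on x} --- as the $\alpha$-free reformulation $x^{3-\beta}\ge c\,\e^{2/3}$ guaranteeing $\e'=\e\,x^{\alpha+\beta-3}\le 1$ --- is arithmetically correct but is not where the exponent $2/(9-3\beta)$ actually comes from in the proof.

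The paper closes the gap with three ingredients absent from your sketch. First, a crude amplitude bound (Lemma~\ref{lem:2}): comparing the bulk cost of $u_*$ on $[x-u_*(x),x]$ with the global bound $\E_\e(u_*)\le C\e^{2/3}$ of Theorem~\ref{thm:1} gives $u_*^3(x)/(3x^\beta)\le C\e^{2/3}$, hence $|u_*(x)|\le C\,\e^{2/9}x^{\beta/3}$. Second, the competitor is not a rescaled sawtooth adjusted at the last tooth, but the minimizer of $\E_\e^{\,x-u_*(x)}$ on $[0,x-u_*(x))$ followed by a unit-slope ramp, whose bulk cost is $u_*^3(x)/\big(3(x-u_*(x))^\beta\big)$; combined with the a.e.\ identity $\varphi'(x)=x^{-\beta}u_*^2(x)$, and once $x\ge \tfrac{3^{1/3}}{3^{1/3}-1}\,u_*(x)$ so that $x/(x-u_*(x))\le 3^{1/3}$, this yields the differential inequality $\varphi(x)\le C\e^{2/3}x+\tfrac{3^{\beta/3}}{3}\,x\,\varphi'(x)$, whose coefficient $3^{\beta/3}/3$ is strictly below $1$ precisely because $\beta<3$, giving $\varphi(x)\le c\,x\,\e^{2/3}$. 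Third --- and this is what produces the exponent in \eqref{cond on x} --- a bootstrap: Lemma~\ref{lem:2} licenses the inequality only for $x\gtrsim \e^{2/9}$; on the complementary range it improves the amplitude bound to $u_*(x)\le C\e^{(2/9)(1+\beta/3)}$, which extends the linear bound, and iterating gives validity for $x\ge C\,\e^{(2/9)(1+\beta/3+\cdots+(\beta/3)^n)}$; letting $n\to\infty$, the geometric series (convergent since $\beta<3$) yields exactly the threshold $\e^{2/(9-3\beta)}$. Without this iteration your argument, even granting the matching step, would stop at the strictly larger threshold $\e^{2/9}$.
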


\medskip

For the proof of Theorem \ref{thm:3} we need the following Lemmas. The first lemma below is also a crucial tool in the next section.

\begin{lemma}\label{lem:1}
  For any $a>0$ let
 \beqn\label{eqn:energy:a}
	  e_\e^a := \inf_{\A^a}\E^a_\e(v)  \qquad \text{ with } \quad \E^a_\e(v):=  \int_{0}^a \left( \e \, t^\alpha\,  |v^{\pr\pr}| + t^{-\beta}\,v^2\right)\,dt
	\eeqn
over the admissible class
	\[
		\A^a := \big\{v\in H^2([0,a])  \colon |v^\pr|=1,\ v(0)=v(a)=0\big\}.
	\]
Then
\[
e_\e^a \leq C\, a^{(3+2\alpha - \beta)/3}\, \e^{2/3} 
\]
for some constant $C>0$.
\end{lemma}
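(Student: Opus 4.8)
The plan is to reduce the estimate on $[0,a]$ to the upper bound on $[0,1]$ already established in Theorem~\ref{thm:1} by a scaling argument. Given a candidate $v\in\A^a$, I would introduce the rescaled function $w(s):=\tfrac{1}{a}\,v(as)$ for $s\in[0,1]$. This particular rescaling is designed to preserve the structure of the admissible class: since $w'(s)=v'(as)$ one has $|w'|=|v'|=1$, and the boundary conditions $w(0)=w(1)=0$ follow from $v(0)=v(a)=0$. Thus $v\mapsto w$ is a bijection between $\A^a$ and $\A=\A^1$.

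Next I would track how the energy transforms under this change of variables. Substituting $t=as$ in $\E^a_\e(v)$ and using $v''(as)=\tfrac1a\,w''(s)$ and $v(as)=a\,w(s)$, a direct computation gives
\beqn
\E^a_\e(v)=\int_0^1\Big(\e\,a^\alpha s^\alpha |w''(s)|+a^{3-\beta}s^{-\beta}w(s)^2\Big)\,ds=a^{3-\beta}\,\E^1_{\e'}(w),
\eeqn
where $\e':=\e\,a^{\alpha+\beta-3}$. The point is that after factoring out $a^{3-\beta}$ the remaining functional is exactly $\E^1$ evaluated at $w$, but with the perturbation parameter replaced by $\e'$.

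With this identity in hand the conclusion follows from the upper bound half of Theorem~\ref{thm:1}. Since $\beta<3$, the construction there produces a test function $\tilde w\in\A$ with $\E^1_{\e'}(\tilde w)\leq C_2(\e')^{2/3}$, where the constant $C_2$ is independent of the perturbation parameter; applying this with $\e'$ in place of $\e$ and pulling $\tilde w$ back to $\A^a$ via the inverse rescaling yields an admissible competitor for the infimum $e_\e^a$. Collecting the powers of $a$,
\beqn
e_\e^a\leq a^{3-\beta}\,C_2\,(\e')^{2/3}=C_2\,a^{3-\beta}\,a^{\frac{2}{3}(\alpha+\beta-3)}\,\e^{2/3}=C_2\,a^{(3+2\alpha-\beta)/3}\,\e^{2/3},
\eeqn
which is the claimed estimate.

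The argument is essentially bookkeeping, so I do not anticipate a serious obstacle; the only points requiring care are verifying that the constant $C_2$ from Theorem~\ref{thm:1} genuinely does not depend on the perturbation parameter (so that replacing $\e$ by $\e'$ is legitimate and the resulting bound is uniform in $a$), and keeping the exponents of $a$ straight in the change of variables. I would also note that the admissibility constraints on $\gamma$ in the upper bound construction of Theorem~\ref{thm:1} depend only on $\alpha$ and $\beta$, and not on the perturbation parameter, so that the very same construction applies verbatim at the scale $\e'$.
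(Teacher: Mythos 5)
Your proposal is correct and is essentially the paper's own argument: the paper uses the very same rescaling $u = v(a\,\cdot)/a$ onto $\A=\A^1$ together with the sawtooth test function of Theorem \ref{thm:1}, merely keeping the weights $\e a^{\alpha}$ and $a^{3-\beta}$ explicit and re-optimizing the number of teeth $n = C\e^{-1/3}a^{(3-\beta-\alpha)/3}$, which is arithmetically identical to your device of absorbing the powers of $a$ into $\e' = \e\,a^{\alpha+\beta-3}$ and invoking the upper bound of Theorem \ref{thm:1} as a black box. The caveat you flag is handled correctly: the constant in Theorem \ref{thm:1} depends only on $\alpha$ and $\beta$ (through $\gamma$) and not on the perturbation parameter, so applying it at $\e'$ is legitimate, subject to the same implicit smallness restriction (the optimal number of teeth, i.e.\ of order $(\e')^{-1/3}$, must be at least $2$) that the paper's own optimization in $n$ also carries.
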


\begin{proof}
Rescaling $t = x/a$ and $u(\cdot) = v(\cdot)/a$, for any $v \in \A^{a}$ we have $\pm 1 = v_{x} = v_t/a = u_t$.
Hence, $u(t) \in \A = \A^{1}$ and
\[
\G^a_\e(u):=\int_{0}^1 \left( \e \, a^{\alpha} t^\alpha\,  |u^{\pr\pr}| + a^{3-\beta} t^{-\beta}\,u^2\right)\,dt \\
					= \int_{0}^a \left( \e \, x^\alpha\,  |v^{\pr\pr}| + x^{-\beta}\,v^2\right)\,dx = \E^a_\e(v).
\]
We may now use the test function $\tilde{u} \in \A$ from Theorem \ref{thm:1} with $n$-many ``teeth'' to conclude
\[
\inf_{\A} \G^a_\e[u] \leq \G^a_\e[\tilde{u}] \leq C_1 \e a^{\alpha} n + \frac{C_2 a^{3-\beta}}{n^2}
\]
where $C_1$ and $C_2$ are independent of $\e$ and $n$. Optimizing in $n$, we get $n = C \e^{-1/3}\, a^{(3 - \beta - \alpha)/3}$. Therefore,
\[
e_\e^a= \G^a_\e[\tilde{u}] \leq  C\, a^{(3+2\alpha - \beta)/3} \, \e^{2/3},
\]
and the result follows.
\end{proof}

\medskip

\begin{lemma} \label{lem:2}
  Let $u_{*}$ be a minimizer of $\E_\e$. Then there is an absolute constant $C>0$ such that for all $x \geq 0$
  \[
  |u_{*}(x)| \leq C \, \e^{2/9} \, x^{\beta/3}.
  \]
\end{lemma}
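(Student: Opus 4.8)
The plan is to bound $|u_*(x)|$ by relating the pointwise value of the minimizer to the energy it accumulates on $[0,x]$, and then to control that accumulated energy using the minimality of $u_*$ together with the scaling estimate from Lemma~\ref{lem:1}. Since $|u_*^\pr|=1$ everywhere, $u_*$ is Lipschitz with constant $1$ and $u_*(0)=0$, so the trivial bound $|u_*(x)|\le x$ holds; the point of the lemma is that this can be improved to $C\e^{2/9}x^{\beta/3}$ in the regime where $x$ is not too small, by exploiting that a large value of $|u_*|$ forces a large contribution to the bulk energy term $\int t^{-\beta}u_*^2\,dt$.

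The key mechanism I would exploit is that if $|u_*(x)|=h$, then because $|u_*^\pr|=1$ the function $u_*$ cannot have returned to a small value on an interval of length comparable to $h$ ending at $x$; concretely, on the interval $[x-h,\,x]$ (assuming $x\ge h$) one has $|u_*(t)|\ge |u_*(x)|-(x-t)\ge h-(x-t)$, so $u_*^2$ is bounded below by a quadratic profile there. First I would therefore estimate from below
\begin{equation}\label{eqn:lb-bulk}
\int_0^x t^{-\beta}u_*^2\,dt \;\ge\; \int_{x-h}^x t^{-\beta}\,(h-(x-t))^2\,dt \;\ge\; x^{-\beta}\int_{x-h}^x (h-(x-t))^2\,dt \;=\; c\,x^{-\beta}\,h^3,
\end{equation}
where I used $t^{-\beta}\ge x^{-\beta}$ for $t\le x$ (valid when $\beta\ge 0$; the case $\beta<0$ is easier since then $t^{-\beta}$ is increasing and one gets an even better lower bound near $t=x$). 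Hence $\varphi(x)=\E_\e^x(u_*)\ge c\,x^{-\beta}h^3$.

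Next I would bound $\varphi(x)$ from above using minimality. By Lemma~\ref{lem:1}, the minimal energy on $[0,x]$ over $\A^x$ satisfies $e_\e^x\le C\,x^{(3+2\alpha-\beta)/3}\e^{2/3}$, and since $u_*$ is a global minimizer of $\E_\e$ on all of $[0,1]$ its restriction to $[0,x]$ cannot cost more energy than the optimal competitor on $[0,x]$ up to controlling the matching at $t=x$ — this is the delicate point. The clean way is a cut-and-paste comparison: replace $u_*$ on $[0,x]$ by the near-optimal test function of Lemma~\ref{lem:1}, adjusting so that the boundary data at $t=x$ match, which shows $\varphi(x)=\E_\e^x(u_*)\le e_\e^x + (\text{lower-order matching cost})\le C\,x^{(3+2\alpha-\beta)/3}\e^{2/3}$. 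Combining with \eqref{eqn:lb-bulk} gives $c\,x^{-\beta}h^3\le C\,x^{(3+2\alpha-\beta)/3}\e^{2/3}$, so
\[
h^3 \;\le\; C\,x^{\beta}\,x^{(3+2\alpha-\beta)/3}\,\e^{2/3} \;=\; C\,x^{(3+2\alpha+2\beta)/3}\,\e^{2/3},
\]
and taking cube roots yields $h\le C\,\e^{2/9}\,x^{(3+2\alpha+2\beta)/9}$. To land exactly on the stated exponent $x^{\beta/3}=x^{3\beta/9}$ I would invoke the hypothesis $2\alpha\ge\beta$ available in this section (cf.\ \eqref{relation}), but I should double-check the exponent bookkeeping, since $(3+2\alpha+2\beta)/9$ and $\beta/3$ agree only under the relation $2\alpha+3=\beta$, which suggests the proof in the paper uses a sharper localization (e.g.\ integrating $t^{-\beta}$ honestly rather than bounding it below by $x^{-\beta}$, or restricting to a dyadic piece near $x$). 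I expect the main obstacle to be precisely this: getting the matching/comparison step rigorous so that $\varphi(x)$ is genuinely controlled by the rescaled minimal energy $e_\e^x$, and then choosing the lower-bound interval optimally so that the resulting exponent collapses to $\beta/3$ under the standing assumptions rather than leaving an $\alpha$-dependent remainder.
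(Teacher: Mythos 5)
Your lower bound is exactly the paper's: using $|u_*^\pr|\leq 1$ to get $|u_*(t)|\geq u_*(x)-(x-t)$ on $[x-u_*(x),x]$ and $t^{-\beta}\geq x^{-\beta}$ there, which yields $\int_{x-u_*(x)}^x t^{-\beta}u_*^2\,dt \geq u_*^3(x)/(3x^\beta)$ (this is \eqref{est3}). But your upper-bound half has a genuine gap, and the step you flag as delicate is in fact fatal. The cut-and-paste comparison $\varphi(x)\leq e_\e^x+(\text{matching cost})$ cannot work here, because the matching competitor must reach the value $u_*(x)=h$ at $t=x$, and by your own lower-bound mechanism \emph{any} such competitor pays at least $c\,x^{-\beta}h^3$ in bulk energy near $x$ — the ramp you would glue on costs about $\tfrac{1}{3}(x-h)^{-\beta}h^3 \geq \tfrac13 x^{-\beta}h^3$. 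So the $h^3$ term on the right of your comparison dominates the $h^3$ term on the left with at least as large a constant, and the inequality yields no information about $h$ unless you already know $h\ll x$ quantitatively. That smallness is precisely the content of this lemma; the paper does carry out your cut-and-paste argument, with the bootstrap needed to absorb the $(x-u_*(x))^{-\beta}$ factor, but only later in Theorem \ref{thm:3} — and it uses Lemma \ref{lem:2} as an input to start the bootstrap (via \eqref{inequal} and \eqref{2}). Running it here is circular.

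The paper's actual proof is much cruder and avoids localization entirely: since $u_*$ is a global minimizer, the upper bound of Theorem \ref{thm:1} gives $\E_\e(u_*)\leq C_2\e^{2/3}$, hence in particular $\int_{x-u_*(x)}^x t^{-\beta}u_*^2\,dt \leq C\e^{2/3}$ (this is \eqref{est2}) — no comparison with $e_\e^x$, no matching, no use of Lemma \ref{lem:1} or of \eqref{relation}. Combined with the lower bound this gives $u_*^3(x)\leq C\,x^\beta\e^{2/3}$, i.e.\ $|u_*(x)|\leq C\e^{2/9}x^{\beta/3}$ directly; the factor $x^{\beta/3}$ comes solely from the weight $t^{-\beta}\geq x^{-\beta}$ in the lower bound, not from any $x$-dependent gain in the energy upper bound. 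Your closing guess that the paper uses a ``sharper localization'' is therefore backwards: it uses a weaker, global bound, and that is exactly why the exponent bookkeeping closes. (Your exponent $x^{(3+2\alpha+2\beta)/9}$ would, under $2\alpha\geq\beta$, even imply the stated bound for $x\leq 1$ — but only if the comparison step were valid, which it is not at this stage of the argument.)
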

\begin{proof}
Fix $x>0$ such that $u_{*}(x)>0$ (if $u_*(x) < 0$, the argument is  analogous). Furthermore, the constraints $u_*^\pr \leq 1$ and $u_*(0) = 0$ imply that $u_{*}(x) \leq x$. It follows from the upper bound in Theorem \ref{thm:1} that
\begin{equation}\label{est2}
\int_{x-u_*(x)}^{x} \frac{u_*^2(t)}{t^{\beta}}\, dt \leq C \e^{2/3}.
\end{equation}
On the other hand, using the constraint $u_{*}^\pr \leq 1$ once again,
\begin{equation}\label{est3}
\int_{x-u_*(x)}^{x} \frac{u_*^2(t)}{t^{\beta}} \,dt \geq \int_{x-u_*(x)}^{x} \frac{(t-x+u_*(x))^2}{t^{\beta}}\, dt \geq  \frac{u_*^3(x)}{3 x^{\beta}}.
\end{equation}
Combining \eqref{est2} and \eqref{est3}, the statement follows.
\end{proof}

\bigskip

We now return to the proof of Theorem \ref{thm:3}.

\begin{proof}[Proof of Theorem \ref{thm:3}]
Note that $\varphi(x)=\min_{\calK_x} \E_\e^x(u)$ where
	\[
		\calK_x := \big\{ u\in H^2([0,x]) \colon |u^{\pr}|=1, \ u(0)=0, \ u(x)=u_*(x) \big\}.
	\]
If $u_{*}(x) = 0$, then
\[
\varphi(x) \leq C\, \e^{2/3} \, x
\]
using Lemma \ref{lem:1} and \eqref{relation}. Now, assume $u_{*}(x) > 0$ (the case $u_{*}(x) < 0$ is treated analogously). In this case we may argue, using minimality of $\varphi$, that
\begin{equation}\label{minimality}
\varphi(x) \leq \E_\e^x(v)
\end{equation}
for some test function $v \in \calK_x$. In particular, choose $v(t)$ as
\beqn \nonumber
v(t) =
\begin{dcases*}
u_{x-u_{*}(x)}(t)			  & \quad if  $t \in [0,x-u_{*}(x))$,\\
t - \big(x - u_*(x)\big) & \quad if $t \in [x-u_{*}(x),x]$,
\end{dcases*}
\eeqn
 where $u_{x-u_{*}(x)}(t)$ is the minimizer of $\E_{\e}^{x - u_*(x)}$ with $\E_{\e}^a$ defined as in \eqref{eqn:energy:a}. 
 
\begin{figure}[ht!]
     \begin{center}
     	\includegraphics[width=0.7\linewidth]{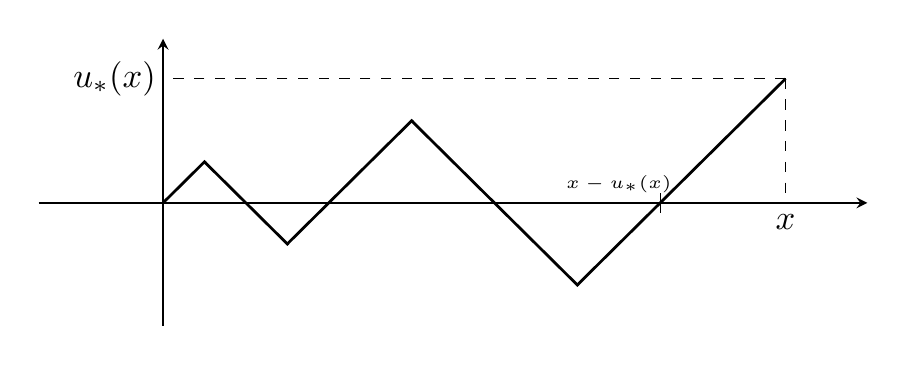}
     \end{center}
    \caption{\footnotesize{
        The function $v$ defined on the interval $[0,x]$ for any $x\in[0,1]$.}
     }
   \label{fig:v}
\end{figure}  

By Lemma \ref{lem:1}, we have that
	\beqn \label{loc:est}
		\begin{aligned}
			\E_\e^x(v) &\leq C \, \e^{2/3}\,(x-u_{*}(x)) + \int_{x-u_{*}(x)}^{x}  \frac{(t - (x - u_*(x)))^2}{t^{\beta}} \,dt \\
								  &\leq C \, \e^{2/3} \, x +  \frac{u_*^3(x)}{3 (x - u_*(x))^{\beta}}.
	\end{aligned}
		\eeqn
On the other hand, it follows from the definition of $\varphi(x)$ that for a.e. $x \in [0,1]$ the function $\varphi(x)$  is differentiable, and for a.e. $x$,
\begin{equation}\label{derivative}
\varphi^\pr(x) = x^{-\beta} u_{*}^2(x).
\end{equation}
Combining \eqref{minimality}, \eqref{loc:est} and \eqref{derivative}, we get
\begin{equation}\label{almost_DE}
\varphi(x) \leq C \, \e^{2/3}\,  x + \frac{u_*(x)}{3} \frac{u_*^2(x)}{(x - u_*(x))^{\beta}} \leq C \e^{2/3} x + \frac{x}{3} \frac{ \varphi^\pr(x) x^{\beta}}{(x - u_*(x))^{\beta}},
\end{equation}
where we used the natural inequality $u_*(x) \leq x$.  By Lemma \ref{lem:2}, $u_*(x) \leq C\, \e^{2/9}$ for all $x \leq 1$. Therefore, if
\[
x \geq \left(\frac{3^{1/3}}{3^{1/3}-1}\right) C\, \e^{2/3}
\]
then we get
\begin{equation}\label{inequal}
x \geq \left(\frac{3^{1/3}}{3^{1/3}-1}\right)  u_{*}(x).
\end{equation}
The latter inequality implies that
\begin{equation}\label{2}
\frac{x}{x-u_{*}(x)} \leq 3^{1/3}.
\end{equation}

In view of \eqref{2}, the inequality \eqref{almost_DE} leads to the following differential inequality:
\[
\varphi(x) \leq C \, \e^{2/3}\, x + \frac{3^{\beta/3}}{3} \, x\, \varphi^\pr(x),
\]
which, in turn, yields for some $c_1>0$,
\begin{equation}\label{improv1}
\varphi(x) \leq c_1\, x\, \e^{2/3} \qquad \text{ for } \quad x \geq \left(\frac{3^{1/3}}{3^{1/3}-1}\right) C \, \e^{2/9},
\end{equation}
since $\beta<3$ by assumption.
For $x \leq  \left(\frac{3^{1/3}}{3^{1/3}-1}\right) C \, \e^{2/9}$, Lemma \ref{lem:2} implies that
\begin{equation}\label{iter1}
u_{*}(x) \leq C_1\, \e^{\frac{2}{9}\left(1+\frac{\beta}{3}\right)}.
\end{equation}
Therefore, for $x \in \left( \left(\frac{3^{1/3}}{3^{1/3}-1}\right) C_1 \e^{\frac{2}{9}\left(1 + \frac{\beta}{3}\right)}, \left(\frac{3^{1/3}}{3^{1/3}-1}\right) C \e^{\frac{2}{9}}\right)$, we obtain \eqref{inequal}. As a result, the estimate \eqref{improv1} extends to this larger interval, i.e., for some $c_2>0$ we have
\begin{equation}\label{improv2}
\varphi(x) \leq c_2 \, x\, \e^{2/3} \qquad \text{ for } \quad x \geq \left(\frac{3^{1/3}}{3^{1/3}-1}\right) \, C_1 \, \e^{2/9(1 + \beta/3)}.
\end{equation}
The inequality \eqref{improv2}, in turn, implies, by Lemma \ref{lem:2},
\begin{equation}\label{iter3}
u_{*}(x) \leq C_2 \, \e^{\frac{2}{9}\left(1+\frac{\beta}{3} + \left(\frac{\beta}{3}\right)^2\right)}
\end{equation}
for $x \leq \left(\frac{3^{1/3}}{3^{1/3}-1}\right) \, C_1 \, \e^{2/9(1 + \beta/3)}$. Iterating this process, for all $n \geq 1$ we have
\[
u_{*}(x) \leq C \, \e^{\frac{2}{9}\left(\sum_{k=1}^{n + 1}\left(\frac{\beta}{3}\right)^k\right)}
\]
as long as $x \leq C \, \e^{\frac{2}{9}\left(\sum_{k=1}^{n}\left(\frac{\beta}{3}\right)^k\right)}$, and thus
\begin{equation}\label{improv3}
\varphi(x) \leq c_2\, x\,  \e^{2/3} \qquad \text{ for } \quad  x \geq C \e^{\frac{2}{9}\left(\sum_{k=1}^{n}\left(\frac{\beta}{3}\right)^k\right)}.
\end{equation}
Passing to the limit as $n \to \infty$ in \eqref{improv3}, the statement of the theorem follows.
\end{proof}

\bigskip

%%%%%%%%%%%%%%%%%%%%%%%%%%%%%%%%%%%%%%%%%%%%%%%%%%%%%%%%%%%%%%%%%
%%%%%%%%%%%%%%%%%%%%%%%%%%%%%%%%%%%%%%%%%%%%%%%%%%%%%%%%%%%%%%%%%
\section{Asymptotic description of minimizers of $\F_\e$}\label{sec:limit}
%%%%%%%%%%%%%%%%%%%%%%%%%%%%%%%%%%%%%%%%%%%%%%%%%%%%%%%%%%%%%%%%%
%%%%%%%%%%%%%%%%%%%%%%%%%%%%%%%%%%%%%%%%%%%%%%%%%%%%%%%%%%%%%%%%%

In this section we prove our next main result describing the minimizing patterns of diffuse-level energies $\F_\e$ in the $\e\to 0$ limit.

\bthm\label{thm:4}
Let $\alpha, \, \beta>0$ such that $\beta-2\alpha<3$. Let $\{u_\e\}_{\e>0}$ be a sequence of minimizers of diffuse-level energies $\F_\e$. Let $R_s^\e(u)(t):= \e^{-1/3} u_\e(s+\e^{1/3}t)$ be the rescalings of $u_\e$ and let $\nu$ be the Young measure which arises as the limit of the maps $s\mapsto R_s^\e u_\e$ as $\e\to 0$. Then for a.e. $s\in[0,1]$ the measure $\nu_s$ is supported on the set of all translations of sawtooth functions 
	\beqn \label{eqn:sawtooth}
		y_{h(s)}(t) = |t|- h/4 \qquad \text{for} \quad t\in \left( -h/2,h/2\right]
	\eeqn
with slope $\pm 1$ and period $h(s):=L\, s^{(\alpha+2\beta)/6}$ where $L:=\big(96 \int_{-1}^1 \sqrt{W} \big)^{1/3}$.
\ethm

We establish this theorem by closely following the arguments in Alberti and M\"{u}ller's work \cite[Chapter 3]{AlMu2001} which relies on a $\Gamma$-convergence argument. In their paper Alberti and M\"{u}ller consider two-scale energies where the weight in front of the elastic term is in $L^\infty$. However, they note that, with some modification, their results would apply to cases where the weight is in $L^1$. Obtaining these modifications for our functional $\E_\e$ is the main goal in this section.

As stated in \cite[Chapter 3]{AlMu2001}, the proof of Theorem \ref{thm:4} requires several steps. The first step is to identify the class of all Young measures $\nu$ that are generated by sequences of $\e$-blowups of functions $u_\e$. We refer the reader to \cite[Chapter 2]{AlMu2001} for details regarding Young measures and the space thereof. For the convenience of the reader we state the next result as a lemma; the proof appears in the paper of Alberti and M\"{u}ller.

\blemma[Proposition 3.1 in \cite{AlMu2001}]\label{lem:Young_meas}
	Let $\nu$ be a Young measure generated by the $\e$-blowups of a countable sequence of measurable functions $u_\e$. Then for a.e. $s\in(0,1)$, $\nu_s$ is an invariant measure on the space of all measures functions $x:\R \to [-\infty,\infty]$ modulo equivalence almost everywhere.
\elemma

The next step involves rewriting the functional $\F_\e(u)$ in terms of the rescaled functions $R_s^\e u$. For simplicity of notation let us denote
	\[
		a(t):= t^\alpha \qquad \text{and} \qquad b(t):= t^{-\beta},
	\]
and by extending the functions $a$ and $b$ periodically out of $(0,1)$ we set
	\[
		a_s^\e(t) := (s+\e^{1/3}t)^\alpha \qquad \text{and} \qquad b_s^\e(t) := (s+\e^{1/3}t)^{-\beta}.
	\]
For any function $u\in H^2_{\per}([0,1])$, let $x_s = R_s^\e u$ for every $s\in [0,1]$. Also, for any fixed $r>0$, and for any function $x$ of class $H^2$ on $(-r,r)$ we set
	\beqn\label{eqn:f_s_eps}
		f_s^\e(x) := \fint_{-r}^r \Big( \e^{2/3}a_s^\e(t)(x^{\pr\pr})^2 + \eps^{-2/3} W(x^{\pr}) + b_s^\e(t) x^2 \Big)\,dt.
	\eeqn
Note that, with this definition we have
	\[
		\e^{-2/3} \F_\e (u) = \int_0^1 f_s^\e (x_s) \,ds.
	\]
	
The third step in Alberti and M\"{u}ller's program describes the asymptotic behavior of the functionals $f_s^\e$ as $\e\to 0$. We state this result in a lemma below.

\blemma \label{lem:f_gamma_conv}
	Let $\alpha,\, \beta>0$. Let $s\in(0,1)$. Then the sequence of functionals $f_s^\e$, extended to be $+\infty$ for any measurable function $x$ on $(-r,r)$ that is not in $H^2(-r,r)$, $\Gamma$-converges to the functional
		\beqn \label{eqn:f_s}
			f_s(x) := \begin{dcases*}
							A_0 \, \sqrt{a(s)} \fint_{-r}^r |x^{\pr\pr}|\,dt+b(s) \fint_{-r}^r x^2\,dt & \quad if $x\in \Se(-r,r)$, \\
							+\infty & \quad otherwise.
						 \end{dcases*}
		\eeqn
where $A_0=2\int_{-1}^1 \sqrt{W}$ and $\Se(-r,r)$ denotes the class of sawtooth functions, i.e., all measurable functions, modulo equivalence almost everywhere, that are continuous and piecewise affine on $(-r,r)$ with slope $\pm 1$.
\elemma

The proof of this lemma relies on a general $\Gamma$-convergence result for anisotropic Cahn--Hilliard-type functionals by Owen and Sternberg \cite[Section 3]{OwSt91}. We state a special case of their result here for the convenience of the readers.

\bprop[Example 1 in Section 3 in \cite{OwSt91}] \label{prop:Owen_Sternberg} 
The sequence of functionals 
	\beqn
		\begin{dcases*}
							\int_0^1 \Big(\e A(x,u)|\nabla u|^2 + \frac{1}{\e}W(u)\Big)\,dx & \quad if $u\in H^1_{\text{per}}([0,1])$, \\
							+\infty & \quad otherwise,
						 \end{dcases*}	
	\eeqn
$\Gamma$-converges with respect to the $L^1$-topology to the functional
	\beqn
		\begin{dcases*}
							\int_0^1 \int_{-1}^1 \sqrt{W(s)A(x,s)}\,dx d|u|(x) & \quad  if $u\in BV_{\text{per}}([0,1])$, and $|u|=1$ a.e.,\\
							+\infty & \quad otherwise,
						 \end{dcases*}	
	\eeqn
as $\e \to 0$, where $d|u|$ denotes the total variation measure.	
\eprop

\bigskip

Now we turn to the proof of the lemma above.

\begin{proof}[Proof of Lemma \ref{lem:f_gamma_conv}]
Note that we cannot directly apply the above proposition to the first two terms of $f_s^\e$ since the weight in the first term of $f_s^\e$ also depends on $\e$. However, since $a_s^{\e}(\cdot)=a(s+\e^{1/3}\,\cdot)$ by definition, and since $a(t)=|t|^\alpha$, we have that $a_s^\e=a(s)+O(\e^\alpha)$ and $a_s^\e \to a(s)$ in $L^1(0,1)$ for any fixed $s\in(0,1)$ as $\e\to 0$. Therefore, after a diagonal argument, the functionals
	\[
		\fint_{-r}^r \Big( \e^{2/3}a_s^\e(t)(x^{\pr\pr})^2 + \eps^{-2/3} W(x^{\pr}) \Big)\,dt
	\]
$\Gamma$-converge, in the topology of $W^{1,1}(-r,r)$, to the functional $A_0 \sqrt{a(s)} \fint_{-r}^r |x^{\pr\pr}|\,dt$ for functions $x\in\Se(-r,r)$ by the above proposition. As argued in the proof of \cite[Proposition 3.3]{AlMu2001}, the additional elastic term is a continuous perturbation due to the fact that $b_s^\e \to b(s)$ in $L^1_{\loc}(0,1)$ for any $s\in(0,1)$. Since $W^{1,1}(-1,1)$ embeds continuously into the set of measurable functions modulo translations, the result follows from \cite[Proposition 2.11(v)]{AlMu2001}.
\end{proof}

\bigskip

Before we can state the main $\Gamma$-convergence result, we need to introduce some notation. Let $\M$ denote the set of all measurable functions $x:\R \to [-\infty,\infty]$ modulo equivalence almost everywhere, and let $\Y((0,1),\M)$ denote the set of all $\M$-valued Young measures on $(0,1)$. Again, we refer the readers to \cite[Chapter 2]{AlMu2001} for a concise introduction to the theory of Young measures.

Define
	\beqn	\label{eqn:H_eps}
		\He_\e(\nu) := \begin{dcases*}
							\int_0^1 \ip{\nu_s}{f_s^\e}\, ds & \quad if $\nu$ is the elementary Young measure \\ 
																	& \quad  associated to $R^\e v$ for some $v\in H^2_{\text{per}}([0,1])$, \\
																	\\
							+\infty & \quad otherwise,
						 \end{dcases*}	
	\eeqn
where $\ip{\mu}{f}:=\int_{\M} f \,d\nu$ denotes the duality pairing.
Note that $\He_\e(\nu)$ is finite if and only if $\nu$ is the elementary Young measure associated with the $\e$-blowup $R^\e v$ of some $v\in H_{\text{per}}^2([0,1])$ and $\He_\e(v) = \e^{-2/3} \F_\e(v)$.

Let $\Inv(\M)$ denote the space of all probability measures on $\M$ that are invariant under translations, and define
	\beqn	\label{eqn:H}
		\He (\nu) := \begin{dcases*}
							\int_0^1 \ip{\nu_s}{f_s}\, ds & \quad if $\nu_s\in \Inv(\M)$ for a.e. $s\in(0,1)$. \\
							+\infty & \quad otherwise.
						 \end{dcases*}	
	\eeqn 
With these definitions we have the following proposition.
	\bprop \label{prop:main_gamma_conv}
		Let $\alpha,\, \beta>0$ such that $\beta-2\alpha <3$. The sequence of functionals $\He_\e$ $\Gamma$-converges to $\He$ on $\Y((0,1),\M)$.
	\eprop
	
As noted in \cite[Remark 3.5]{AlMu2001}, Alberti and M\"{u}ller establish this convergence result for a large class of problems; however, the upper bound estimates rely on the definition of their version of $f_s^\e$. In the proof below, we will show the modifications needed for the upper bound of the energies.

\begin{proof} As per definition of $\Gamma$-convergence, we need to establish a lower bound and an upper bound inequality.

\medskip

\noindent \emph{Lower bound.} The lower bound inequality, namely, that $\liminf_{\e\to 0} \He_\e(\nu_\e) \geq \He(\nu)$ for any sequence $\nu_\e \to \nu$ in $\Y((0,1),\M)$ can be obtained by the arguments in \cite{AlMu2001} verbatim. Assuming that the left-hand side of the inequality is finite, and that the limit is attained (possibly after passing to a subsequence), we see that by the definition of the energy $\He_\e$ each $\nu_\e$ has to be the elementary Young measure associated to some $\e$-blowup. Since $\nu_\e\to \nu$, by Lemma \ref{lem:Young_meas}, $\nu_s$ is an invariant measure for a.e. $s\in(0,1)$. Thus the lower bound inequality becomes $\liminf_{\e\to 0} \int_0^1 \ip{\nu_s^\e}{f_s^\e}\,ds \geq \int_0^1 \ip{\nu_s}{f_s}\,ds$. This inequality follows from Lemma \ref{lem:f_gamma_conv} and \cite[Theorem 2.12(iv)]{AlMu2001}.

\medskip

\noindent \emph{Upper bound.} Let $\D$ denote the class of Young measures $\nu\in\Y((0,1),\M)$ such that there exists countably many disjoint intervals that cover almost all of $(0,1)$, and on every such interval $\nu$ agrees a.e. with an elementary invariant measure $\epsilon_x$ with $x\in\Se_{\per}(0,h)$ and $h>0$, depending on the interval. Then \cite[Lemma 3.8]{AlMu2001} proves that $\D$ is $\He$-dense in $\Y((0,1),\M)$. That is, for every $\nu\in\Y((0,1),\M)$ such that $\He(\nu)$ is finite, there exists $\nu^k\in\D$ such that $\nu^k \to \nu$ in $\Y((0,1),\M)$ and $\limsup_{k\to\infty} \He(\nu^k) \leq \He(\nu)$. By \cite[Remark 2.10]{AlMu2001}, in order to establish the upper bound inequality, it suffices to show that every measure $\nu\in\D$ can be approximated in energy by $\e$-blowups of some functions on $(0,1)$. This can be established in two steps: First by approximating a constant Young measure $\nu$, and next obtaining a localization of such an approximation to a general Young measure in $\D$.

Now let $I$ be a given bounded interval, $x\in\Se_{\per}(0,h)$ for some $h>0$, and $x^\e \in H^2_{\per}(0,h)$ be a sequence of functions converging to $x$ in $\M$ and satisfying
	\beqn \label{eqn:limsup_fs_trans}
		\limsup_{\e\to 0} \fint_{\substack{\tau\in[0,h]\\s\in I}} f_s^\e(T_\tau x^\e)\,d\tau ds \leq \fint_{s\in I} \ip{\epsilon_x}{f_s}\,ds + \eta
	\eeqn
for a given $\eta>0$ where $T_\tau x(t) = x(t-\tau)$ denotes the translation operator.
For every $\e>0$ choose $\tau^\e \in [0,h]$, and set
	\beqn \label{eqn:v_eps_trans}
		v^\e(s) := \e^{1/3} x^\e (\e^{-1/3}s - \tau^\e) \qquad \text{ for every } s\in\R.
	\eeqn
Then, by \cite[Lemma 3.9]{AlMu2001}, $v^\e\in H^2_{\per}(0,h\e^{1/3})$, and the $\e$-blowups $R^\e v^\e$ generate on $I$ the constant Young measure $\epsilon_x$. Also, the numbers $\tau^\e$ in \eqref{eqn:v_eps_trans} can be chosen so that
	\beqn	\label{eqn:limsup_fs_avrg}
		\limsup_{\e \to 0} \fint_I f_s^\e (R^\e v^\e)\,ds \leq \fint_{I} \ip{\epsilon_x}{f_s}\,ds + \eta.
	\eeqn
A crucial fact in the proof of the upper bound is that every $\nu\in\D$ can be approximated locally by a sequence of $\e$-blowups. Namely, by \cite[Lemma 3.10]{AlMu2001}, for any $\nu\in \D$ and any $\eta>0$, there exists finitely many intervals $I_i$ with pairwise disjoint closures that cover $(0,1)$ with $|(0,1)\setminus \bigcup I_i|<\eta$ such that there exists $h_i>0$ and $x_i \in \Se_{\per}(0,h_i)$ satisfying $\nu_s = \eps_{x_i}$ for a.e. $s\in I_i$, and for every $\e>0$ there exists a 1-Lipschitz function $x_i^\e \in H^2_{\per}(0,h_i)$ that converges to $x_i$ in $\M$ and satisfies
	\[
		\limsup_{\e\to 0} \fint_{\substack{\tau\in[0,h_i]\\s\in I_i}} f_s^\e(T_\tau x_i^\e)\,d\tau ds \leq \fint_{s\in I_i} \ip{\epsilon_{x_i}}{f_s}\,ds + \eta.
	\]

\medskip

Since $\D$ is $\He$-dense in $\Y((0,1),\M)$, as Alberti and M\"{u}ller note, by \cite[Remark 2.10]{AlMu2001} it suffices to construct, for every $\delta>0$ and $\nu\in\D$, functions $w^\e\in H^2_{\per}(0,1)$ so that the elementary Young measures $\nu^\e$ associated with the $\e$-blowups $R^\e w^\e$ satisfy
	\beqn	\label{eqn:upper_bd_main}
		\limsup_{\e\to 0} \int_0^1 f_s(R_s^\e w^\e)\,ds \leq \int_0^1 \ip{\nu_s}{f_s}\,ds + \delta.
	\eeqn

\medskip

Now, fix $\nu\in\D$ and $\delta>0$. Let $\eta>0$ be a constant that will be chosen later.  Define $v_i^\e$ as in \eqref{eqn:v_eps_trans} via the functions $x_i^\e$ where $\tau_i^\e$ are chosen so that \eqref{eqn:limsup_fs_avrg} is satisfied. Let us denote the intervals $I_i$ by $(a_i,b_i)$ where $a_i<b_i<a_{i+1}<b_{i+1}$. We set
	\beqn \label{eqn:v_eps_outside}
		v^\e(s) := v_i^\e(s) \qquad \text{ if } s\in (a_i+r\e^{1/3}, b_i-r\e^{1/3}) \text{ for some } i,
	\eeqn
where $r$ is the constant in the definition of $f_s$. Now we will describe how to extend the function $v^\e$ out of the union of the intervals $(a_i+r\e^{1/3},b_i-r\e^{1/3})$. Let $M>\max\{1,r\}$ such that $|x_i(t)|+1 \leq M$ for every $i$ and every $t\in\R$. Since $x_i^\e \to x_i$ in $\M$ and are 1-Lipschitz they also converge uniformly. Therefore for $\e$ sufficiently small $|x_i^\e(t)|\leq M$ for every $i$ and $t$ , and $|v^\e(s)|\leq M \e^{1/3}$. For $\e$ sufficiently small, extend $v^\e$ to the interval $[b_i-r\e^{1/3},a_{i+1}+r\e^{1/3}]$ so that $(v^\e)^{\pr}$ alternates between the values $\pm 1$ in a sequence of intervals of length of order $\e^{1/3}$ except the first and the last one which have length of order $M\e^{1/3}$. We take $(v^\e)^{\pr\pr}$ of order $\e^{-1}$ on the transition layers with length of order $\e$ which separate two consecutive intervals where $|(v^\e)^{\pr}|=1$. Due to construction the value of $v^\e$ is of order $\e^{1/3}$ in each interval except the first and the last one where it is of order $M\e^{1/3}$. Finally, we note that since the weight $b(s)$ is singular at the origin, we have to modify the construction of Alberti and M\"{u}ller near $s=0$. 

Fix $\theta>0$, to be chosen later, and let
	\beqn	\label{eqn:upper_bd_test_func}
		w^\e(s) := \begin{dcases*}
							\hat{u}(s/\theta) & \quad if $s\in [0,\theta]$, \\
							\pm(s-\theta) & \quad if $s\in[\theta,\theta_M)$, \\
							v^\e(s) & \quad if $s\in [\theta_M,1]$,
						 \end{dcases*}	
	\eeqn 
where $\hat{u}$ is defined by \eqref{smoothTF} in the proof of Theorem \ref{thm:2}, and $\theta_M$ is the solution of
	\[
		v^\e(\theta_M) = \pm (\theta_M-\theta).
	\]
Note that due to the a priori bound on $v^\e$, we have that $\theta \leq \theta_M \leq \theta+\e^{1/3} M$.

%\begin{figure}[ht!]
%     \begin{center}
%     	\includegraphics[width=0.5\linewidth]{Fig4.png}
%     \end{center}
%    \caption{\footnotesize{
%        The function $w^\e$.}
%     }
%   \label{fig:w_eps}
%\end{figure}  

Also, as shown by Alberti and M\"{u}ller,  the elementary Young measures $\nu^\e$ generated by the blowups $R^\e w^\e$ approximate $\nu$ in some norm that metrizes the space $\Y((0,1),\M)$. We refer the reader to equation (2.2) in \cite{AlMu2001} for the definition of this norm.

Now, to prove \eqref{eqn:upper_bd_main}, let us write
	\[
		\int_0^1 f_s^\e (R_s^\e w^\e)\,ds = \int_0^{\theta} f_s^\e(R_s^\e w^\e)\,ds + \int_{\theta}^{\theta_M} f_s^\e(R_s^\e w^\e)\,ds + \int_{\theta_M}^1 f_s^\e(R_s^\e w^\e)\,ds.
	\]
Arguing as in the proof of \cite[Theorem 3.4]{AlMu2001}, it is easy to see that the estimates (3.29)--(3.32) of Alberti and M\"{u}ller hold in our case, since we are away from the singularity at $s=0$, with an additional weight of $\theta^{-\beta}$. Namely,
	\beqn \label{eqn:first_est}
		\int_0^1 f_s^\e (R_s^\e w^\e)\,ds \leq  \int_0^{\theta} f_s^\e(R_s^\e w^\e)\,ds + \sum_{i} \int_{I_i} f_s^\e(R_s^\e v_i^\e)\,ds + \frac{1}{\theta^\beta} \Big( C_1 \eta + C_2 M^3 \e^{1/3} \Big)
	\eeqn
for some constants $C_1,\ C_2>0$.

On the other hand, for $s\in[0,\theta]$, we have that either $R_s^\e w^\e(s) = \theta \hat{u}(\theta^{-1}(s+\e^{1/3}t))$ or $R_s^\e w^\e(s) = \pm ((s+\e^{1/3}t)-\theta)$. Thus, for $\e$ sufficiently small, a direct computation after a change of variables shows that for $s\in[0,\theta]$,
	\begin{align*}
		f_s^\e (R_s^\e w^\e) &\leq \frac{1}{2r \e^{2/3}} \int_0^{\theta_M} a(\xi) \big(w^\e(\xi)\big)^{\pr\pr} + W\big((w^\e(\xi))^{\pr}\big) + b(\xi) \big( w^\e(\xi)\big)^2\,d\xi \\
									   &= 	\frac{1}{2r \e^{2/3}} \left( \int_0^\theta \quad + \int_{\theta}^{\theta_M} \quad \right) \\
									   &\leq \frac{1}{2r \e^{2/3}} \Big( \theta^{(3+2\alpha-\beta)/3}\e^{2/3} \Big) + \frac{1}{2r \e^{2/3}}\frac{M^3 \e}{\theta^\beta}.
	\end{align*}
In the last line of the above estimate, the first term is a consequence of Lemma \ref{lem:1} and the second term follows after a direct calculation of the energy contribution. Note that, by assumptions on $\alpha$ and $\beta$, we have $3+2\alpha-\beta>0$.
		
Returning to \eqref{eqn:first_est}, we have that
	\[
		\int_0^1 f_s^\e (R_s^\e w^\e)\,ds \leq \sum_{i} \int_{I_i} f_s^\e(R_s^\e v_i^\e)\,ds + \frac{1}{\theta^\beta} \Big( C_1 \eta + C_2 M^3 \e^{1/3} \Big) + C_3 \theta^{(3+2\alpha-\beta)/3}.
	\]	
We let $\e\to 0$, and obtain, by \eqref{eqn:limsup_fs_avrg},
	\[
		\limsup_{\e\to 0} \int_0^1 f_s^\e(R_s^\e w^\e)\,ds \leq \int_0^1 \ip{\nu_s}{f_s}\,ds + \frac{C_1 \eta}{\theta^\beta}+ C_3 \theta^{(3+2\alpha-\beta)/3}.
	\]
Letting $\theta = \eta^{1/(2\beta)}$ yields
	\[
		\limsup_{\e\to 0} \int_0^1 f_s^\e(R_s^\e w^\e)\,ds \leq \int_0^1 \ip{\nu_s}{f_s}\,ds + C_1 \eta^{1/3}+ C_3 \eta^{(3+2\alpha-\beta)/(6\beta)}.
	\]
Hence, 	by choosing $C_1 \eta^{1/3}+ C_3 \eta^{(3+2\alpha-\beta)/(6\beta)} < \delta$ we obtain \eqref{eqn:upper_bd_main}.
\end{proof}

\medskip

Before we turn to the proof of Theorem \ref{thm:4} we would like to note that a classical consequence of $\Gamma$-convergence of the energies $\He_\e$ is the following result.

\begin{corollary}
	Let $\alpha, \, \beta>0$ such that $\beta-2\alpha<3$. For every $\e>0$, let $u^\e$ be a minimizer of $\F_\e$ on $H^2_{\per}([0,1])$, and let $\nu$ be a Young measure generated by a subsequence of the $\e$-blowups $R^\e u^\e$. Then $\nu$ minimizes the functional $\He$ given by \eqref{eqn:H}. That is, for a.e. $s\in(0,1)$ the measure $\nu_s$ minimizes $\ip{\mu}{f_s}$ among all invariant probability measures $\mu$ on $\M$. 
\end{corollary}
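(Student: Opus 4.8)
The plan is to deduce this from the fundamental theorem of $\Gamma$-convergence, using Proposition \ref{prop:main_gamma_conv} as the $\Gamma$-convergence input and Theorem \ref{thm:2} as the source of the uniform energy bound. First I would record the bookkeeping observation already made in the text: $\He_\e(\nu)$ is finite precisely when $\nu$ is the elementary Young measure $\nu^\e$ associated with the $\e$-blowup $R^\e v$ of some $v\in H^2_{\per}([0,1])$, and in that case $\He_\e(\nu^\e)=\e^{-2/3}\F_\e(v)$. Consequently, minimizing $\He_\e$ over all of $\Y((0,1),\M)$ is the same problem as minimizing $\e^{-2/3}\F_\e$ over $H^2_{\per}([0,1])$, so the elementary Young measure $\nu^\e$ attached to the minimizer $u^\e$ is itself a minimizer of $\He_\e$. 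By Theorem \ref{thm:2} we then have the uniform bound $\He_\e(\nu^\e)=\e^{-2/3}\F_\e(u^\e)\leq C_2$, which guarantees that the limit Young measure carries finite energy.

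Next I would run the standard argument along the subsequence that generates $\nu$, i.e.\ $\nu^\e\to\nu$ in $\Y((0,1),\M)$. The liminf part of Proposition \ref{prop:main_gamma_conv} gives $\He(\nu)\leq\liminf_{\e\to 0}\He_\e(\nu^\e)\leq C_2<\infty$, and Lemma \ref{lem:Young_meas} ensures $\nu_s\in\Inv(\M)$ for a.e.\ $s$, so $\He(\nu)$ is given by \eqref{eqn:H}. To see that $\nu$ is a minimizer, fix any competitor $\mu\in\Y((0,1),\M)$ with $\He(\mu)<\infty$. The upper-bound (recovery sequence) part of Proposition \ref{prop:main_gamma_conv} produces $\mu^\e\to\mu$ with $\limsup_{\e\to 0}\He_\e(\mu^\e)\leq\He(\mu)$ and each $\He_\e(\mu^\e)$ finite, hence each $\mu^\e$ is an admissible competitor for the minimization of $\He_\e$. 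Minimality of $\nu^\e$ then yields $\He_\e(\nu^\e)\leq\He_\e(\mu^\e)$, and chaining the inequalities
\[
	\He(\nu)\leq\liminf_{\e\to 0}\He_\e(\nu^\e)\leq\limsup_{\e\to 0}\He_\e(\mu^\e)\leq\He(\mu)
\]
gives $\He(\nu)\leq\He(\mu)$ for every such $\mu$. Thus $\nu$ minimizes $\He$.

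Finally, for the pointwise ``for a.e.\ $s$'' conclusion I would localize the minimization. Since $\He(\nu)=\int_0^1\ip{\nu_s}{f_s}\,ds$ is an integral of the slicewise functional $\ip{\nu_s}{f_s}$ with each slice constrained only through $\nu_s\in\Inv(\M)$, minimizing the integral is equivalent to minimizing the integrand for a.e.\ $s$. The hard part will be this measurable-selection step: if $\nu_s$ failed to minimize $\ip{\cdot}{f_s}$ over $\Inv(\M)$ on a set of $s$ of positive measure, one must select, measurably in $s$, a competitor $\mu_s$ of strictly smaller slice energy and splice it in to build an admissible Young measure of strictly lower total energy, contradicting the global minimality just established. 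This requires checking that $s\mapsto\inf_{\mu\in\Inv(\M)}\ip{\mu}{f_s}$ is measurable and admits a measurable almost-minimizing selection within the framework of \cite[Chapter 2]{AlMu2001}; here the slice functionals $f_s$ depend on $s$ only through the continuous weights $a(s)$ and $b(s)$ appearing in \eqref{eqn:f_s}, which makes the measurable dependence transparent and renders the selection routine. This pointwise characterization of $\nu_s$ as a minimizer of $\ip{\cdot}{f_s}$ over invariant measures is precisely the statement that is unpacked, for the explicit form of $f_s$, in the proof of Theorem \ref{thm:4}.
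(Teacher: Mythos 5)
Your proposal is correct and is precisely the ``classical consequence of $\Gamma$-convergence'' that the paper invokes without writing out: the elementary Young measures attached to the blowups $R^\e u^\e$ minimize $\He_\e$, the liminf and recovery-sequence inequalities from Proposition \ref{prop:main_gamma_conv} chain together to give global minimality of $\nu$ for $\He$, and the slicewise statement follows by localization. The one remark worth adding is that the measurable-selection step you flag as the hard part can be bypassed in this setting: Proposition \ref{prop:sawtooth} exhibits the unique slicewise minimizer as the elementary invariant measure of $y_{h(s)}$ with $h(s)=L\,s^{(\alpha+2\beta)/6}$ continuous in $s$, so the competitor Young measure $s\mapsto \epsilon_{y_{h(s)}}$ is manifestly admissible and global minimality forces $\ip{\nu_s}{f_s}$ to equal the slicewise minimum for a.e.\ $s$.
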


In fact, every Young measure generated by the $\e$-blowups of the minimizers of $\F_\e$ is uniquely determined by the minimality property stated in the corollary above. For $h>0$, if we define $y_h$ to be the $h$-periodic function on $\R$ given by \eqref{eqn:sawtooth}	
then the following statement holds.

\bprop\label{prop:sawtooth}
	Let $s\in(0,1)$. If $\bar{\mu}$ minimizes $\ip{\mu}{f_s}$ among all invariant probability measures on $\M$, then $\bar{\mu}$ is the elementary invariant measure associated with the function $y_{h(s)}$ where
		\[
			h(s) := L\, s^{(\alpha+2\beta)/6} \quad \text{ with } L:=\left(96 \int_{-1}^1 \sqrt{W} \right)^{1/3}.
		\]
\eprop

As a consequence of this proposition, combined with the corollary above, we have the following result which completes Theorem \ref{thm:4}.

\begin{corollary}
	Let $\alpha, \, \beta>0$ such that $\beta-2\alpha<3$. For every $\e>0$, let $u^\e$ be a minimizer of $\F_\e$ on $H^2_{\per}([0,1])$. Then the $\e$-blowups $R^\e u^\e$ generate a unique Young measure $\nu \in \Y((0,1),\M)$, and a.e. $s\in(0,1)$, $\nu_s$ is the elementary invariant measure associated with the sawtooth function $y_{h(s)}$.
\end{corollary}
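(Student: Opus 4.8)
The plan is to derive this corollary as a direct consequence of three facts already in hand: the $\Gamma$-convergence $\He_\e\to\He$ (Proposition \ref{prop:main_gamma_conv}), the resulting minimality of any generated limit Young measure (the corollary stated just after Proposition \ref{prop:main_gamma_conv}), and the \emph{uniqueness} of the pointwise minimizer supplied by Proposition \ref{prop:sawtooth}. The only genuinely new content is to upgrade conclusions that a priori hold only along subsequences into a statement about the full family $\{R^\e u^\e\}_{\e>0}$, and this upgrade is exactly where the uniqueness in Proposition \ref{prop:sawtooth} is decisive.

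First I would record equicoercivity: the minimal energy satisfies $\He_\e(u^\e)=\e^{-2/3}\F_\e(u^\e)\le C$ uniformly in $\e$, by the scaling law of Theorem \ref{thm:2} (whose upper-bound construction adapts to the periodic class with the same $\e^{2/3}$ order). Combined with the sequential compactness of $\Y((0,1),\M)$ in the metric generating its topology \cite[Chapter 2]{AlMu2001}, this guarantees that every sequence $\e_j\to 0$ admits a subsequence along which the $\e$-blowups $R^\e u^\e$ generate some Young measure $\nu\in\Y((0,1),\M)$. By the corollary following Proposition \ref{prop:main_gamma_conv}, any such $\nu$ minimizes $\He$, so for a.e. $s\in(0,1)$ the slice $\nu_s$ minimizes $\ip{\mu}{f_s}$ among all invariant probability measures $\mu$ on $\M$.

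Next I would invoke Proposition \ref{prop:sawtooth} to identify this minimizer uniquely: for a.e. $s$ one necessarily has $\nu_s=\eps_{y_{h(s)}}$ with $h(s)=L\,s^{(\alpha+2\beta)/6}$. In particular the candidate limit $\bar\nu$, defined by $\bar\nu_s:=\eps_{y_{h(s)}}$, does not depend on the chosen subsequence. Here I would also check that $\bar\nu$ is a legitimate element of $\Y((0,1),\M)$: since $s\mapsto h(s)$ is continuous and strictly positive on $(0,1)$ and $h\mapsto\eps_{y_h}$ is a measurable map into $\Inv(\M)$, the assignment $s\mapsto\bar\nu_s$ is a measurable family of invariant probability measures, hence a bona fide Young measure.

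Finally I would run the standard argument that every subsequence has a further subsequence converging to the same limit. Each subsequence of $\{R^\e u^\e\}$ has, by the compactness above, a further subsequence generating a Young measure, and by the two preceding paragraphs that measure must equal $\bar\nu$. Since every subsequence therefore has a further subsequence generating the \emph{same} $\bar\nu$, the full family $\{R^\e u^\e\}_{\e>0}$ generates the unique Young measure $\nu=\bar\nu$, yielding both the asserted uniqueness and the identification $\nu_s=\eps_{y_{h(s)}}$ for a.e. $s$. The main obstacle is not any single estimate but the bookkeeping: one must ensure that the minimality conclusion (stated for subsequentially generated measures) and the pointwise uniqueness of Proposition \ref{prop:sawtooth} are reconciled on a single full-measure set of $s$, and that the topology on $\Y((0,1),\M)$ is metrizable so that the subsequence extraction is valid; both are provided by the framework of \cite[Chapter 2]{AlMu2001}.
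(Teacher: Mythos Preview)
Your proposal is correct and follows precisely the route the paper indicates: the paper merely asserts that the corollary is ``a consequence of this proposition, combined with the corollary above,'' and you have spelled out exactly that combination---the previous corollary gives minimality of any subsequentially generated Young measure, Proposition~\ref{prop:sawtooth} supplies uniqueness of the pointwise minimizer, and the standard every-subsequence-has-a-further-subsequence argument upgrades this to convergence of the full family. Your added remarks on equicoercivity and on the measurability of $s\mapsto\bar\nu_s$ are reasonable bookkeeping that the paper leaves implicit.
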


Proposition \ref{prop:sawtooth} above is only a modified version of \cite[Theorem 3.12]{AlMu2001} by Alberti and M\"{u}ller. The proof of their theorem applies to our case with a minor modification as the majority of the arguments are independent of the specific form of the functionals $f_s$. The only modification is needed in the computation of $\ip{\eps_x}{f_s}$. Namely, with the inclusion of a weight in front of the singular perturbation term, in Alberti and M\"{u}ller's notation, we have that
	\[
		\ip{\eps_x}{f_s} = \sum_{i=1}^n \frac{h_i}{h} g(h_i,p_i),
	\]
where $g(h,p)= \frac{A_0 \sqrt{a(s)}}{h} + \frac{b(s) h^2}{12} + b(s) p^2$. Since $g$ is convex in $p$, the minimum occurs when $p=0$. Then optimizing in $h$ we see that the unique minimum occurs when $h(s) = (48 A_0 \sqrt{a(s)})^{1/3} (b(s))^{-1/3}= (48A_0)^{1/3} s^{(\alpha+2\beta)/6}$.

%%%%%%%%%%%%%%%%%%%%%%%%%%%%%%%%%%%%%%%%%%
%%%%%%%%%%%%%%%%%%%%%%%%%%%%%%%%%%%%%%%%%%%

\bibliographystyle{IEEEtranS}
\def\url#1{}
\bibliography{lit}

% Generated by IEEEtranS.bst, version: 1.14 (2015/08/26)
\begin{thebibliography}{10}
\providecommand{\url}[1]{#1}
\csname url@samestyle\endcsname
\providecommand{\newblock}{\relax}
\providecommand{\bibinfo}[2]{#2}
\providecommand{\BIBentrySTDinterwordspacing}{\spaceskip=0pt\relax}
\providecommand{\BIBentryALTinterwordstretchfactor}{4}
\providecommand{\BIBentryALTinterwordspacing}{\spaceskip=\fontdimen2\font plus
\BIBentryALTinterwordstretchfactor\fontdimen3\font minus
  \fontdimen4\font\relax}
\providecommand{\BIBforeignlanguage}[2]{{%
\expandafter\ifx\csname l@#1\endcsname\relax
\typeout{** WARNING: IEEEtranS.bst: No hyphenation pattern has been}%
\typeout{** loaded for the language `#1'. Using the pattern for}%
\typeout{** the default language instead.}%
\else
\language=\csname l@#1\endcsname
\fi
#2}}
\providecommand{\BIBdecl}{\relax}
\BIBdecl

\bibitem{AlMu2001}
\BIBentryALTinterwordspacing
G.~Alberti and S.~M\"uller, ``A new approach to variational problems with
  multiple scales,'' \emph{Comm. Pure Appl. Math.}, vol.~54, no.~7, pp.
  761--825, 2001.  \url{https://doi.org/10.1002/cpa.1013}
\BIBentrySTDinterwordspacing

\bibitem{GC}
T.~L. Chantawansri, A.~W. Bosse, A.~Hexemer, H.~D. Ceniceros, C.~J.
  Garc{\'\i}a-Cervera, E.~J. Kramer, and G.~H. Fredrickson, ``Self-consistent
  field theory simulations of block copolymer assembly on a sphere,''
  \emph{Physical Review E}, vol.~75, no.~3, p. 031802, 2007.

\bibitem{C2001}
\BIBentryALTinterwordspacing
R.~Choksi, ``Scaling laws in microphase separation of diblock copolymers,''
  \emph{J. Nonlinear Sci.}, vol.~11, no.~3, pp. 223--236, 2001.
  \url{http://dx.doi.org/10.1007/s00332-001-0456-y}
\BIBentrySTDinterwordspacing

\bibitem{ChToTs15}
\BIBentryALTinterwordspacing
R.~Choksi, I.~Topaloglu, and G.~Tsogtgerel, ``Axisymmetric critical points of a
  nonlocal isoperimetric problem on the two-sphere,'' \emph{ESAIM: COCV},
  vol.~21, no.~1, pp. 247--270, 2015.
  \url{http://dx.doi.org/10.1051/cocv/2014031}
\BIBentrySTDinterwordspacing

\bibitem{ChMuTo2017}
R.~Choksi, C.~B. Muratov, and I.~Topaloglu, ``An old problem resurfaces
  nonlocally: {G}amow's liquid drops inspire today's research and
  applications,'' \emph{Notices Amer. Math. Soc.}, vol.~64, no.~11, pp.
  1275--1283, 2017.

\bibitem{ChoksiOctober2003}
R.~Choksi and X.~Ren, ``On the derivation of a density functional theory for
  microphase separation of diblock copolymers,'' \emph{Journal of Statistical
  Physics}, vol. 113, no. 1/2, pp. 151--176, 2003.

\bibitem{Co2000}
\BIBentryALTinterwordspacing
S.~Conti, ``Branched microstructures: scaling and asymptotic self-similarity,''
  \emph{Comm. Pure Appl. Math.}, vol.~53, no.~11, pp. 1448--1474, 2000.
  \url{https://doi.org/10.1002/1097-0312(200011)53:11<1448::AID-CPA6>3.0.CO;2-C}
\BIBentrySTDinterwordspacing

\bibitem{Hi2017}
T.~Higuchi, ``Microphase-separated structures under spherical 3d confinement,''
  \emph{Polymer Journal}, vol.~49, no.~6, p. 467, 2017.

\bibitem{KoMu94}
\BIBentryALTinterwordspacing
R.~V. Kohn and S.~M\"{u}ller, ``Surface energy and microstructure in coherent
  phase transitions,'' \emph{Comm. Pure Appl. Math.}, vol.~47, no.~4, pp.
  405--435, 1994.  \url{https://doi.org/10.1002/cpa.3160470402}
\BIBentrySTDinterwordspacing

\bibitem{Li_et_al2006}
\BIBentryALTinterwordspacing
{Li, J. F.}, {Fan, J.}, {Zhang, H. D.}, {Qiu, F.}, {Tang, P.}, and {Yang, Y.
  L.}, ``Self-assembled pattern formation of block copolymers on the surface of
  the sphere using self-consistent field theory,'' \emph{Eur. Phys. J. E},
  vol.~20, no.~4, pp. 449--457, 2006.
  \url{https://doi.org/10.1140/epje/i2006-10035-y}
\BIBentrySTDinterwordspacing

\bibitem{McGoPa07}
\BIBentryALTinterwordspacing
K.~J. McGown and H.~R. Parks, ``The generalization of {F}aulhaber's formula to
  sums of non-integral powers,'' \emph{J. Math. Anal. Appl.}, vol. 330, no.~1,
  pp. 571--575, 2007.  \url{https://doi.org/10.1016/j.jmaa.2006.08.019}
\BIBentrySTDinterwordspacing

\bibitem{Mul93}
\BIBentryALTinterwordspacing
S.~M\"uller, ``Singular perturbations as a selection criterion for periodic
  minimizing sequences,'' \emph{Calc. Var. Partial Differential Equations},
  vol.~1, no.~2, pp. 169--204, 1993.  \url{https://doi.org/10.1007/BF01191616}
\BIBentrySTDinterwordspacing

\bibitem{OK}
T.~Ohta and K.~Kawasaki, ``Equilibrium morphology of block copolymer melts,''
  \emph{Macromolecules}, vol.~19, no.~10, pp. 2621--2632, 1986.

\bibitem{OwSt91}
\BIBentryALTinterwordspacing
N.~C. Owen and P.~Sternberg, ``Nonconvex variational problems with anisotropic
  perturbations,'' \emph{Nonlinear Anal.}, vol.~16, no. 7-8, pp. 705--719,
  1991.  \url{https://doi.org/10.1016/0362-546X(91)90177-3}
\BIBentrySTDinterwordspacing

\bibitem{Tang}
P.~Tang, F.~Qiu, H.~Zhang, and Y.~Yang, ``Phase separation patterns for diblock
  copolymers on spherical surfaces: A finite volume method,'' \emph{Physical
  Review E}, vol.~72, no.~1, p. 016710, 2005.

\bibitem{To2013}
\BIBentryALTinterwordspacing
I.~Topaloglu, ``On a nonlocal isoperimetric problem on the two-sphere,''
  \emph{Commun. Pure Appl. Anal.}, vol.~12, no.~1, pp. 597--620, 2013.
  \url{http://dx.doi.org/10.3934/cpaa.2013.12.597}
\BIBentrySTDinterwordspacing

\bibitem{Yip06}
\BIBentryALTinterwordspacing
N.~K. Yip, ``Structure of stable solutions of a one-dimensional variational
  problem,'' \emph{ESAIM Control Optim. Calc. Var.}, vol.~12, no.~4, pp.
  721--751, 2006.  \url{https://doi.org/10.1051/cocv:2006019}
\BIBentrySTDinterwordspacing

\end{thebibliography}

\end{document}